\definecolor{bda}{rgb}{0.1,0.5,0.1}
\definecolor{bdb}{rgb}{0.1,0.8,0.1}
\newcommand{\mk}[1]{#1}
\newcommand{\MK}[1]{#1}
\theoremstyle{plain}
\newtheorem{theorem}{Theorem}
\newtheorem*{theorem*}{Theorem}
\newtheorem{lemma}{Lemma}
\newtheorem{proposition}{Proposition}
\theoremstyle{definition}
\theoremstyle{remark}
\newcommand{\A}{\mathscr{A}}
\newcommand{\B}{\mathscr{B}}
\newcommand{\lambdau}{\smash{\overline{\lambda}}}
\newcommand{\lambdal}{\smash{\underline{\lambda}}}
\newcommand{\dom}{\mathscr{D}}
\newcommand{\form}{\mathscr{E}}
\newcommand{\R}{\mathbf{R}}
\newcommand{\sub}{\subseteq}
\newcommand{\ph}{\varphi}
\newcommand{\eps}{\varepsilon}
\newcommand{\set}[1]{\left\{ #1 \right\}}
\newcommand{\expr}[1]{\left( #1 \right)}
\newcommand{\ignore}[1]{}
\newcommand{\rad}{{r}}
\newcommand{\BB}{{\mathcal{B}}}
\newcommand{\KK}{{\mathcal{K}}}
\newcommand{\LL}{{\mathcal{L}}}
\newcommand{\MM}{{\mathcal{M}}}
\newcommand{\PP}{{\mathcal{P}}}
\newcommand{\QQ}{{\mathcal{Q}}}
\newcommand{\RR}{{\Lambda}}
\newcommand{\formula}[2][nolabel]%
{%
 \ifthenelse{\equal{#1}{nolabel}}%
 {\begin{align*} #2 \end{align*}}%
 {%
  \ifthenelse{\equal{#1}{}}%
  {\begin{align} #2 \end{align}}%
  {\begin{align} \label{#1} \begin{aligned} #2 \end{aligned} \end{align}}%
 }%
}
\mathchardef\pFcomma=\mathcode`,
\begin{document}

%
%

\title{Eigenvalues of the fractional Laplace operator \\ in the unit ball}
\author{
{Bart{\l}omiej Dyda\footnotemark[1] \footnotemark[3]} , \;  
{Alexey Kuznetsov\footnotemark[2] \footnotemark[4]} , \;
{Mateusz Kwa{\'s}nicki\footnotemark[1] \footnotemark[5]}
}
\date{\today}

\maketitle
{
\renewcommand{\thefootnote}{\fnsymbol{footnote}}
\footnotetext[1]{Faculty of Pure and Applied Mathematics, Wroc{\l}aw University of \mk{Science and} Technology, ul. Wybrze{\.z}e Wyspia{\'n}skiego 27, 50-370 Wroc{\l}aw, Poland. Email: \{bartlomiej.dyda,mateusz.kwasnicki\}@pwr.edu.pl}
\footnotetext[2]{Dept. of Mathematics and Statistics,  York University,
4700 Keele Street, Toronto, ON, M3J 1P3, Canada.   Email: kuznetsov@mathstat.yorku.ca}
\footnotetext[3]{Supported by Polish National Science Centre (NCN) grant no. 2012/07/B/ST1/03356}
\footnotetext[4]{Research supported by the Natural Sciences and Engineering Research Council of Canada}
\footnotetext[5]{Supported by Polish National Science Centre (NCN) grant no. 2011/03/D/ST1/00311}
}

\begin{abstract}
We describe a highly efficient numerical scheme for finding two-sided bounds for the eigenvalues of the fractional Laplace operator
$(-\Delta)^{\alpha/2}$ in the unit ball $D \subset \R^d$, with a Dirichlet condition in the complement of $D$. The standard Rayleigh--Ritz variational method is used for the upper bounds, while the lower bounds involve the less-known Aronszajn method of intermediate problems. Both require explicit expressions for the fractional Laplace operator applied to a linearly dense set of functions in $L^2(D)$. We use appropriate Jacobi-type orthogonal polynomials, which were studied in a companion paper~\cite{bib:dkk15}. Our numerical scheme can be applied analytically when polynomials of degree two are involved. This is used to partially resolve the conjecture of T.~Kulczycki, which claims that the second smallest eigenvalue corresponds to an antisymmetric function: we prove that this is the case when either $d \le 2$ and $\alpha \in (0, 2]$, or $d \le 9$ and $\alpha = 1$, and we provide strong numerical evidence 
\mk{for $d \le 9$ and general $\alpha \in (0, 2]$}.
\end{abstract}

{\vskip 0.15cm}
 \noindent {\it Keywords}: Fractional Laplace operator, eigenvalues, unit ball, Rayleigh--Ritz method, Aronszajn method, numerical bounds
{\vskip 0.25cm}
 \noindent {\it 2010 Mathematics Subject Classification }: 35P15, 35S05, 65F15

%
%

\section{Introduction and main results}
\label{sec:intro}

For $d \ge 1$ and $\alpha \in (0, 2)$, the \emph{fractional Laplace operator}, or \emph{Riesz fractional derivative}, is defined as
\formula{
\mk{ (-\Delta)^{\alpha/2} f(x) }&\mk{ = -\frac{2^\alpha \Gamma(\tfrac{d + \alpha}{2})}{\pi^{d/2} |\Gamma(-\tfrac{\alpha}{2})|} \lim_{\eps \to 0^+} \int_{\mathbb{R}^d \setminus B(0, \eps)} \frac{f(y) - f(x)}{|y - x|^{d + \alpha}} \, dy}
}
(see, for example,~\cite{bib:k:lap,bib:l72}). The eigenvalue problem for $(-\Delta)^{\alpha/2}$ in a bounded domain $D \sub \R^d$, with a zero condition in the complement of $D$:
\formula[eq:problem]{
 \begin{cases} (-\Delta)^{\alpha/2} \ph_n(x) = \lambda_n \ph_n(x) & \text{for $x \in D$,} \\ \ph_n(x) = 0 & \text{for $x \notin D$} \end{cases}
}
(here $n = 0, 1, \dots$), has been studied by numerous authors. \MK{For general results, such as existence and basic properties of solutions, we refer the reader to~\cite{bib:bk04,bib:bbkrsv09}. Here we only mention that $\lambda_n$ can be arranged in a non-decreasing unbounded sequence, the fundamental eigenvalue $\lambda_0$ is positive and simple, and $\ph_0$ has a constant sign in $D$. The following general estimate of $\lambda_n$ was proved in~\cite{bib:cs05} (see also~\cite{bib:dm07}): if $D$ is convex, $0 < \alpha \le \beta \le 2$ and $\lambda_n\mk{(\alpha)}$ denotes the sequence of eigenvalues of the problem~\eqref{eq:problem} (arranged in a non-decreasing order) with a given parameter $\alpha$, then
\formula{
 \tfrac{1}{2} (\lambda_n\mk{(\beta)})^{\alpha/\beta} \le \lambda_n\mk{(\alpha)} \le (\lambda_n\mk{(\beta)})^{\alpha/\beta} .
}
This is particularly useful when $\beta = 2$, because $\lambda_n\mk{(2)}$ is known explicitly for many domains. For example, if $D$ is the unit ball, $\lambda_n\mk{(2)}$ is the square of an appropriate zero of the Bessel function. Sharper bounds for $\lambda_n$ are known only when $D$ is a ball and either $n = 0$ (see~\cite{bib:bk04,bib:d12}) or $d = 1$ (see~\cite{bib:bk04,bib:k12}).}

From now on, $D$ denotes the unit ball in $\R^d$ and $\alpha \in (0, 2]$. In a companion paper~\cite{bib:dkk15} we find explicit expressions for $(-\Delta)^{\alpha/2}$ applied to a variety of function. In particular, we find the eigenvalues and eigenfunctions (which turn out to be polynomials) of the operator $f \mapsto (-\Delta)^{\alpha/2}(\omega f)$, where $\omega(x) = (1 - |x|^2)_+^{\alpha/2}$; here and below $a_+ = \max(a, 0)$. This result is stated in Theorem~\ref{th:jacobi} below. In the present article, we use these eigenfunctions to find estimates of $\lambda_n$. The upper bounds follow by the standard Rayleigh--Ritz variational method, while for the lower bounds we use a less-known Aronszajn method of intermediate problems. These are essentially \emph{numerical} methods designed for finding estimates of the eigenvalues of an appropriate variational problem. Nevertheless, the same methods can be used to prove \emph{analytical} bounds for the first few eigenvalues, when matrices and polynomials of small degree are involved.

Before we state our main results, we explain why one can restrict attention to radial eigenfunctions, and this requires some notation. We say that $V$ is a \emph{solid harmonic polynomial} in $\R^d$ of degree $l \ge 0$ if $V$ is a homogeneous polynomial of degree $l$ which is harmonic (that is, $\Delta V(x) = 0$ for all $x \in \R^d$). Solid harmonic polynomials of a given degree $l$ form a finite-dimensional vector space of dimension $M_{d,l} = \tfrac{d+2l-2}{d+l-2} \, \binom{d+l-2}{l}$, and the $L^2$ space over the surface measure on the unit sphere is a direct sum of these spaces over $l \ge 0$ \mk{(see~\cite{bib:abw01,bib:dx14})}. We fix an orthonormal basis of this $L^2$ space, which will be denoted by $\{V_{l,m}\}$, with $l \ge 0$ and $1 \le m \le M_{d,l}$, so that $V_{l,m}$ is a solid harmonic polynomial of degree $l$.

\mk{The solutions of the problem~\eqref{eq:problem} for the unit ball $D$ fall into different symmetry classes, described by solid harmonic polynomials. This fact follows easily from} 
\emph{Bochner's relation}, which asserts that every Fourier multiplier with radial symbol $m(|\xi|)$ maps a function on $\R^d$ of the form $V(x) f(|x|)$ to a function 
$V(x) g(|x|)$ of the same type, and furthermore a multiplier with symbol $m(|\tilde{\xi}|)$ maps $f(|\tilde{x}|)$ to $g(|\tilde{x}|)$ in dimension $d + 2 l$ (that is, here $\tilde{x}, \tilde{\xi} \in \R^{d + 2 l}$). For more details, see Proposition~3 in~\cite{bib:dkk15}. \mk{As a consequence, each radial eigenfunction, with eigenvalue $\lambda$, of $(-\Delta)^{\alpha/2}$ in a $(d + 2 l)$-dimensional ball gives rise to $M_{d,l}$ non-radial (unless $l = 0$) linearly independent eigenfunctions, with the same eigenvalue $\lambda$, of $(-\Delta)^{\alpha/2}$ in a $d$-dimensional ball. This is formally stated in the following result.}

\begin{proposition}
\label{prop:bochner}
Let $\ph_{d,n}(|x|)$ and $\lambda_{d,n}$ denote the sequence of all eigenfunctions, and the corresponding eigenvalues, which are \emph{radial} solutions of the problem~\eqref{eq:problem} for the unit ball~\mk{$D \subseteq \R^d$}. We assume that $\lambda_{d,n}$ are arranged in a non-decreasing order (with respect to $n$). Then the functions $V_{l,m}(x) \ph_{d + 2 l, n}(|x|)$, where $l \ge 0$, $1 \le m \le M_{d,l}$ and $n \ge 0$, form a complete orthogonal system of solutions of the problem~\eqref{eq:problem}, with corresponding eigenvalues $\lambda_{d + 2 l, n}$.
\end{proposition}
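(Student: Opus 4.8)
The plan is to combine Bochner's relation (Proposition~3 of~\cite{bib:dkk15}) with two soft facts: the decomposition of $L^2(D)$ into angular sectors labelled by solid harmonic polynomials, and the completeness of the radial eigenfunctions $\ph_{d+2l,n}$ in each sector. First I would set up the decomposition: writing a function on $D$ in "polar" form, $L^2(D)$ splits as the orthogonal direct sum, over $l \ge 0$ and $1 \le m \le M_{d,l}$, of the subspaces $\hilbert_{l,m} = \{ V_{l,m}(x) f(|x|) : f \text{ radial}\}$, simply because $\{V_{l,m}\}$ is an orthonormal basis of $L^2$ of the unit sphere and Fubini lets us integrate in the radial and angular variables separately. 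The point of Bochner's relation is that $(-\Delta)^{\alpha/2}$, being a Fourier multiplier with radial symbol $|\xi|^\alpha$, leaves each $\hilbert_{l,m}$ invariant, and on $\hilbert_{l,m}$ it acts by: $V_{l,m}(x) f(|x|) \mapsto V_{l,m}(x) g(|x|)$, where $g$ is obtained from $f$ by applying the $(d+2l)$-dimensional fractional Laplacian to the radial function $f$. One has to be a little careful that the Dirichlet exterior condition is respected: $V_{l,m}(x) f(|x|)$ vanishes outside $D \subseteq \R^d$ precisely when the radial profile $f$ vanishes outside the unit ball of $\R^{d+2l}$, so the eigenvalue problem~\eqref{eq:problem} in $\R^d$ restricted to $\hilbert_{l,m}$ is \emph{exactly} the radial eigenvalue problem~\eqref{eq:problem} in $\R^{d+2l}$.

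Given this reduction, the radial problem in dimension $d+2l$ has, by hypothesis, the eigenfunctions $\ph_{d+2l,n}(|\cdot|)$ with eigenvalues $\lambda_{d+2l,n}$, and these form a complete orthogonal system in the radial subspace of $L^2$ of the unit ball in $\R^{d+2l}$ — completeness here is the standard spectral-theoretic fact that the eigenfunctions of the (self-adjoint, compact-resolvent) Dirichlet fractional Laplacian form an orthogonal basis, and restricting to the radial subspace, which is invariant, preserves this. Pulling this back through the isometry $f \mapsto V_{l,m}(x) f(|x|)$ (up to the constant $\norm{V_{l,m}}_{L^2(S^{d-1})} = 1$), the functions $V_{l,m}(x)\ph_{d+2l,n}(|x|)$, with $n \ge 0$, form a complete orthogonal system in $\hilbert_{l,m}$, each solving~\eqref{eq:problem} in $\R^d$ with eigenvalue $\lambda_{d+2l,n}$. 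Since $L^2(D) = \bigoplus_{l,m} \hilbert_{l,m}$, taking the union over all $l$ and $m$ yields a complete orthogonal system of solutions of~\eqref{eq:problem} in $D$, which is the assertion. Orthogonality across different $(l,m)$ is automatic from the orthogonality of the sectors; within a fixed sector it is the orthogonality of the $\ph_{d+2l,n}$.

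The main obstacle is not conceptual but one of bookkeeping with the exterior condition and the precise meaning of "radial eigenfunction." Concretely: one must verify that the hypothesis defining $\ph_{d+2l,n}$ — that these are \emph{all} radial solutions of~\eqref{eq:problem} in $\R^{d+2l}$ — is strong enough, i.e. that no eigenfunction of~\eqref{eq:problem} in $\R^d$ lying in $\hilbert_{l,m}$ is missed, which is where completeness of the radial system in dimension $d+2l$ is essential and must be invoked carefully (a priori one only knows completeness of the \emph{full} eigenfunction system). One also has to note that the map is not merely formal but genuinely carries $L^2$-eigenfunctions to $L^2$-eigenfunctions in both directions, so that the counting is exact. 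Once these points are pinned down — and they follow directly from Bochner's relation together with self-adjointness of the Dirichlet fractional Laplacian — the proposition is immediate.
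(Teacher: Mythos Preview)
Your proposal is correct and follows essentially the same route as the paper: the paper does not give a formal proof of Proposition~\ref{prop:bochner} but states that it ``follows easily from Bochner's relation'' (with a reference to Proposition~3 in~\cite{bib:dkk15}), and the paragraph preceding the proposition is precisely the sketch you have fleshed out---decompose $L^2(D)$ into the angular sectors $\hilbert_{l,m}$, use Bochner's relation to identify the action of $(-\Delta)^{\alpha/2}$ on each sector with the radial problem in dimension $d+2l$, and invoke completeness of the radial eigenfunctions there. Your attention to the bookkeeping (the exterior condition, the two-sided correspondence of $L^2$-eigenfunctions, and completeness within the radial subspace) goes beyond what the paper spells out, but this is exactly the detail the paper leaves to the reader.
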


In particular, the sequence $\lambda_n$ can be obtained by rearranging in a non-decreasing way the numbers $\lambda_{d + 2 l, n}$, with $l \ge 0$ and $n \ge 0$, each repeated $M_{d,l}$ times. For this reason in the remaining part of the article we restrict our attention to radial functions, and so we will no longer need harmonic polynomials and the parameter $l$.

The following two theorems are the main results of this article. The first one provides a numerical scheme for the estimates of $\lambda_n$. The other one is an interesting corollary, which partially resolves the conjecture of T.~Kulczycki. In order to state these results, first we need to introduce some \mk{notation}. We denote by $A^{(N)}$ and $B^{(N)}$ the $N \times N$ matrices having entries 
\formula{
 A_{m,n} & = \delta_{m,n} \, \frac{\mk{2^{\alpha}} \pi^{d/2} \Gamma(\tfrac{d}{2}) (\Gamma(\tfrac{\alpha}{2} + n + 1))^2}{(\tfrac{d + \alpha}{2} + 2 n) (\Gamma(\tfrac{d}{2} + n))^2} \, , \\
 B_{m,n} & = \frac{\mk{\pi^{d/2}} \Gamma(\alpha + 1) \Gamma(\tfrac{d}{2}) \Gamma(\tfrac{d}{2} + m + n) \Gamma(\tfrac{\alpha}{2} + m + 1) \Gamma(\tfrac{\alpha}{2} + n + 1)}{\Gamma(\tfrac{\alpha}{2} + m - n + 1) \Gamma(\tfrac{\alpha}{2} + n - m + 1) \Gamma(\tfrac{d}{2} + m) \Gamma(\tfrac{d}{2} + n) \Gamma(\tfrac{d}{2} + m + n + 1 + \alpha)} \, ,
}
with $0 \le m,n < N$ (here and below, $\delta_{n,n} = 1$ and $\delta_{m,n} = 0$ when $m \ne n$). We also define
\formula{
 \mu_n & = \frac{2^\alpha \Gamma(\tfrac{\alpha}{2} + n + 1) \Gamma(\tfrac{d + \alpha}{2} + n)}{n! \, \Gamma(\tfrac{d}{2} + n)} \, , \\
 \sigma_n & = \frac{\pi^{d/2} n! \Gamma(\tfrac{d}{2}) \Gamma(\tfrac{\alpha}{2} + n + 1)}{(\tfrac{d + \alpha}{2} + 2 n) \Gamma(\tfrac{d}{2} + n) \Gamma(\tfrac{d + \alpha}{2} + n)} \, , \\
 I_{m,n} & = \int_{
\mk{D}
} \frac{(P_m(x) - P_{m+1}(x)) (P_n(x) - P_{n+1}(x))}{(1 - |x|^2)^{-\alpha/2} - 1} \, dx , \\
 P_n(x) & = \frac{(-1)^n n! \Gamma(\tfrac{d}{2})}{\Gamma(\tfrac{d}{2} + n)} \, P^{(\alpha/2,d/2-1)}_n(2 |x|^2 - 1)  = {_2F_1}(-n, \tfrac{d + \alpha}{2} + n; \tfrac{d}{2}; |x|^2) .
}
Here \mk{$D$ is the unit ball,} $P^{(\alpha,\beta)}_n$ is the Jacobi polynomial, and ${_2F_1}$ is the \mk{Gauss's} hypergeometric function. For the last equality, see formula~8.962.1 in~\cite{bib:gr07}.

\begin{theorem}
\label{th:eigenvalues}
Let $d \ge 1$, $N \ge 0$ and $0 < \alpha \le 2$. Denote by $\lambda_{d,n}$, with $n \ge 0$, the non-decreasing sequence of the eigenvalues corresponding to radial solutions of the problem~\eqref{eq:problem} for the unit ball \mk{$D \subseteq \R^d$}. Then
\formula[eq:eigenbounds]{
 \mk{\lambdal_{d,n}^{(N)}} \le \lambda_{d,n} \le \mk{\lambdau_{d,n}^{(N)}} ,
}
where $\mk{\lambdal_{d,n}^{(N)}}$ and $\mk{\lambdau_{d,n}^{(N)}}$ are defined as follows:
\begin{itemize}
\item[(i)]
The numbers $\mk{\lambdau_{d,n}^{(N)}}$, with $0 \le n < N$, are the solutions $\lambda$, arranged in a nondecreasing order, of the $N \times N$ matrix eigenvalue problem $A^{(N)} x = \lambda B^{(N)} x$. For $n \ge N$, we let $\mk{\lambdau_{d,n}^{(N)}} = \infty$.
\item[(ii)]
The sequence $\mk{\lambdal_{d,n}^{(N)}}$, with $n \ge 0$, 
\mk{is the nondecreasing rearrangement of the sequence, whose first $N + 1$ terms are the $N + 1$ zeroes of the polynomial}
\formula{
 w^{(N)}(\lambda) & = \expr{\prod_{n = 0}^N (\mu_n - \lambda)} \det W^{(N)}(\lambda) \mk{,}
}
\mk{and the remaining terms are the numbers $\mu_n$, with $n \ge N + 1$.} 
Here the entries of \mk{the matrix} $W^{(N)}(\lambda)$ are given by
\formula{
 W_{m,n}(\lambda) & = I_{m,n} + \frac{\lambda \delta_{m,n} \sigma_n}{\mu_m - \lambda} - \frac{\lambda \delta_{m+1,n} \sigma_n}{\mu_{m+1} - \lambda} - \frac{\lambda \delta_{m,n+1} \sigma_{n+1}}{\mu_m - \lambda} + \frac{\lambda \delta_{m+1,n+1} \sigma_{n+1}}{\mu_{m+1} - \lambda} \, ,
}
with $0 \le m,n < N$.
\end{itemize}
\end{theorem}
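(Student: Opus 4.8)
The plan is to derive both bounds from the general machinery of the Rayleigh--Ritz and Aronszajn methods applied to the quadratic form associated with $(-\Delta)^{\alpha/2}$ on radial functions in $L^2(D)$, using the explicit diagonalization provided by Theorem~\ref{th:jacobi}. First I would set up the variational framework: the problem~\eqref{eq:problem} restricted to radial functions corresponds to a nonnegative self-adjoint operator $\LL$ with compact resolvent on the Hilbert space $\hilbert$ of square-integrable radial functions on $D$, whose form domain is the natural fractional Sobolev-type space. The key input is that the functions $\omega(x) P_n(x)$, with $\omega(x) = (1 - |x|^2)_+^{\alpha/2}$, satisfy $(-\Delta)^{\alpha/2}(\omega P_n) = \mu_n \, \omega P_n$ pointwise in $D$ (this is the content of Theorem~\ref{th:jacobi}, specialized to radial functions), and that $\{\omega P_n : n \ge 0\}$ is linearly dense in $\hilbert$. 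The scalars $\sigma_n$ are the $L^2(D)$ norms (squared, up to the known normalization) of the $\omega P_n$, and the matrix $A^{(N)}$, $B^{(N)}$ entries are, respectively, the form values $\form(P_m - P_{m+1}, P_n - P_{n+1})$ and the Gram matrix entries $\scalar{(P_m - P_{m+1})\,,\,P_n - P_{n+1}}_{L^2(D)}$ of the trial functions $P_n - P_{n+1}$, which vanish on the boundary $\partial D$ and hence lie in the form domain; this last identification is the one genuine computation, carried out using the hypergeometric representation of $P_n$ and standard Jacobi-polynomial integral formulas (for $B_{m,n}$) together with the representation of the form via the base operator with eigenfunctions $\omega P_n$ (for $A_{m,n}$ and for $I_{m,n}$).

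For the upper bounds, part~(i), I would invoke the Rayleigh--Ritz / Courant--Fischer min--max principle: restricting the Rayleigh quotient $\form(u,u)/\norm{u}^2$ to the $N$-dimensional trial space $\mathrm{span}\{P_0 - P_1, \dots, P_{N-1} - P_N\}$ yields upper bounds $\lambdau_{d,n}^{(N)}$, $0 \le n < N$, for the first $N$ eigenvalues $\lambda_{d,n}$, and these Ritz values are exactly the eigenvalues of the generalized matrix problem $A^{(N)} x = \lambda B^{(N)} x$ (with $B^{(N)}$ the Gram matrix, necessarily positive definite by linear independence of the trial functions). Since we only have $N$ trial functions, there is no bound for $n \ge N$, explaining the convention $\lambdau_{d,n}^{(N)} = \infty$.

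For the lower bounds, part~(ii), the idea is Aronszajn's method of intermediate problems. One starts from a base operator $\LL_0$ whose eigenvalues $\mu_n$ are explicitly known and satisfy $\LL_0 \le \LL$ (in form sense), namely the operator diagonalized by the system $\omega P_n$ — concretely $\LL$ acts on $\hilbert$ and $\LL_0$ is obtained by, morally, dropping a nonnegative part; the crucial structural fact from~\cite{bib:dkk15} is that $\LL - \LL_0$ is controlled by the ``remainder'' kernel whose quadratic form on the $P_m - P_{m+1}$ produces the matrix $I_{m,n}$ after subtracting the diagonal $\mu_n$-contributions. One then forms the $N$-th intermediate operator $\LL_0^{(N)} = \LL_0 + P_{\mathcal V} (\LL - \LL_0) P_{\mathcal V}$, where $P_{\mathcal V}$ is the projection onto the $N$-dimensional space spanned by the chosen vectors; its eigenvalues lie below those of $\LL$ by the Weyl-type monotonicity $\LL_0^{(N)} \le \LL$, and above those of $\LL_0$. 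The eigenvalues of $\LL_0^{(N)}$ are computed via the standard Weinstein--Aronszajn determinant formula: they are the $\mu_n$ together with the zeros of $\det W^{(N)}(\lambda)$, and clearing the poles at $\lambda = \mu_n$ (which introduces the factor $\prod_{n=0}^N (\mu_n - \lambda)$) gives the polynomial $w^{(N)}(\lambda)$ whose roots, rearranged together with the tail $\{\mu_n : n \ge N+1\}$, form the sequence $\lambdal_{d,n}^{(N)}$.

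The main obstacle I anticipate is \emph{not} the abstract min--max and Aronszajn steps, which are classical, but rather the precise identification of the base operator $\LL_0$ and of the decomposition $\LL = \LL_0 + (\LL - \LL_0)$ in a form that makes $\LL_0 \le \LL$ transparent while simultaneously producing exactly the matrices $A^{(N)}$, $B^{(N)}$, and $W^{(N)}(\lambda)$ with the stated closed-form entries; this requires combining the eigenfunction relation $(-\Delta)^{\alpha/2}(\omega P_n) = \mu_n \omega P_n$ with the explicit Gram and form integrals, and carefully tracking the normalization constants $\sigma_n$ and $\mu_n$ through the Weinstein--Aronszajn determinant. The computation of $I_{m,n}$, $A_{m,n}$, and $B_{m,n}$ in closed form — reducing integrals of products of Jacobi polynomials against the weights $(1-|x|^2)^{\pm\alpha/2}$ to Gamma functions — is lengthy but routine given the hypergeometric identities referenced, so I would relegate it to a computational lemma.
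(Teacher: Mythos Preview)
Your high-level architecture (Rayleigh--Ritz for the upper bounds, Aronszajn's intermediate problems for the lower bounds) matches the paper, but the concrete identifications are wrong in both halves, and these are not cosmetic slips: they are exactly what makes the stated matrices $A^{(N)}$, $B^{(N)}$, $W^{(N)}(\lambda)$ come out right.

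For part~(i), the Rayleigh--Ritz trial functions are \emph{not} $P_n - P_{n+1}$ but $f_n = \omega P_n$. With this choice one has $\form(f_m,f_n) = \langle \omega P_m, (-\Delta)^{\alpha/2}(\omega P_n)\rangle = \mu_n \langle \omega P_m, P_n\rangle = \delta_{m,n}\,\mu_n \sigma_n$, which is precisely the diagonal matrix $A^{(N)}$; and $\langle f_m,f_n\rangle = \langle \omega P_m,\omega P_n\rangle = \pi_{m,n} = B_{m,n}$. Your choice $P_n - P_{n+1}$ would give a non-diagonal form matrix and would not even lie in the form domain: these polynomials do \emph{not} vanish on $\partial D$ (one checks directly from the Chu--Vandermonde evaluation of ${}_2F_1(-n,\cdot;\cdot;1)$). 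Also, $\sigma_n = \|\sqrt{\omega}\,P_n\|_2^2$, not $\|\omega P_n\|_2^2$; the latter is $\pi_{n,n}$.

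For part~(ii), the paper does \emph{not} split $\LL = \LL_0 + (\LL - \LL_0)$ with $\LL_0 \le \LL$. Instead it keeps $\A = (-\Delta)^{\alpha/2}$ fixed and perturbs the right-hand side of the generalized problem $\A f = \lambda \B f$: the base problem is $\A f = \lambda \B^{(0)} f$ with $\B^{(0)} f = \omega^{-1} f$, whose eigenpairs are exactly $(\mu_n, \omega P_n)$ by Theorem~\ref{th:jacobi}. Since $\omega^{-1} \ge 1 = \B$, the base eigenvalues lie below $\lambda_{d,n}$. The intermediate operators are $\B^{(N)} = \B^{(0)} - (\text{rank-}N\text{ projection of }\tilde{\B})$ with $\tilde{\B} = \B^{(0)} - \B = (\omega^{-1}-1)\cdot$, and the Aronszajn test functions are $g_n = (P_n - P_{n+1})/(\omega^{-1}-1)$, chosen so that $\tilde{\B} g_n = P_n - P_{n+1}$ is a finite combination of the base eigenfunctions $P_k$ (the subtraction cancels the singularity at the \emph{origin}, where $\omega^{-1}-1$ vanishes, not at the boundary). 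This is what produces the specific Weinstein--Aronszajn entries $W_{m,n}(\lambda) = I_{m,n} + \lambda\sigma_n\delta_{m,n}/(\mu_m-\lambda) - \cdots$ and hence the polynomial $w^{(N)}$. Your operator-splitting description does not lead to this matrix, and the vague phrase ``dropping a nonnegative part'' is not how the base problem arises here.
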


\mk{We emphasize that quite often the zeroes of the polynomial $w^{(N)}(\lambda)$ are interlaced with the numbers $\mu_n$, $n \ge N + 1$. For example, depending on the parameters $d$ and $\alpha$, the lower bound $
\mk{\lambdal_{d,1}^{(1)}}
$ is equal either to the larger zero of $w^{(1)}(\lambda)$ or to $\mu_2$, see Figure~\ref{fig:gap}. Thus, nondecreasing rearrangement of the sequence of lower bounds in part~(ii) of Theorem~\ref{th:eigenvalues} is essential.}

\mk{Observe that for $N = 0$ the estimate~\eqref{eq:eigenbounds} reduces to $\lambda_{d,n} \ge \mu_n$. For $N > 0$, t}he expression for \mk{$\lambdal_{d,n}^{(N)}$} is rather complicated, but as we will see below both \mk{lower and upper bounds of Theorem~\ref{th:eigenvalues}} are well-suited for 
numerical calculations and symbolic manipulation.

\begin{theorem}\label{th:antisymmetric}
Let $1 \le d \le 9$ and $\alpha = 1$, or \mk{$d \in \{1, 2\}$} and $0 < \alpha \le 2$. 
\mk{Let $\lambda_1$ be the second smallest eigenvalue}
of the problem~\eqref{eq:problem} for the unit ball $D$. Then the eigenfunctions corresponding to $\lambda_1$ are antisymmetric, that is, they satisfy the relation $\ph(-x) = -\ph(x)$.
\end{theorem}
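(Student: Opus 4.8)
The plan is to use the Bochner-type decomposition of Proposition~\ref{prop:bochner} to reduce the statement to the single scalar inequality
\formula{
 \lambda_{d+2,0} < \lambda_{d,1} ,
}
and then to prove this inequality from the explicit bounds of Theorem~\ref{th:eigenvalues} using only polynomials of degree at most two. For the reduction: by Proposition~\ref{prop:bochner} the eigenfunctions of~\eqref{eq:problem} in $D$ are the functions $V_{l,m}(x)\,\ph_{d+2l,n}(|x|)$ with eigenvalue $\lambda_{d+2l,n}$, and such a function is antisymmetric precisely when $l$ is odd, since a solid harmonic polynomial of odd degree is an odd function. As $\lambda_0$ is simple and $\ph_0$ has constant sign, rotational invariance forces $\ph_0$ to be radial, so $\lambda_0 = \lambda_{d,0}$. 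The key point is that $D \mapsto \lambda_{D,0}$ is strictly increasing along $D, D+2, D+4, \dots$: by the Bochner relation the mode $V_{1,1}(x)\,\ph_{D+2,0}(|x|)$ is an eigenfunction of the $D$-dimensional problem with eigenvalue $\lambda_{D+2,0}$ and it is an odd function of $x$, so $\lambda_{D+2,0} \le \lambda_{D,0}$ would contradict either the minimality of $\lambda_{D,0}$ or, in case of equality, its simplicity (such an odd function not being proportional to the constant-sign $\ph_{D,0}$); iterating gives $\lambda_{d+2,0} < \lambda_{d+2l,0}$ for all $l \ge 2$. Together with the trivial monotonicity of $\lambda_{D,n}$ in $n$, this yields $\lambda_1 = \min(\lambda_{d+2,0}, \lambda_{d,1})$; and if $\lambda_{d+2,0} < \lambda_{d,1}$, then every eigenfunction for $\lambda_1 = \lambda_{d+2,0}$ is built from modes $V_{l,m}(x)\ph_{d+2l,n}(|x|)$ with $\lambda_{d+2l,n} = \lambda_{d+2,0}$, which forces $l = 1$, hence antisymmetry. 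So it remains to prove $\lambda_{d+2,0} < \lambda_{d,1}$.

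\emph{The scalar inequality.} It suffices to show $\lambdau_{d+2,0}^{(2)} < \lambdal_{d,1}^{(1)}$, where the upper bound comes from Theorem~\ref{th:eigenvalues}(i) in dimension $d+2$ and the lower bound from Theorem~\ref{th:eigenvalues}(ii) in dimension $d$; only polynomials of degree $\le 2$ then appear. Concretely $\lambdau_{d+2,0}^{(2)}$ is the smaller root of the quadratic $\det(A^{(2)} - \lambda B^{(2)}) = 0$ (with $A^{(2)}, B^{(2)}$ in dimension $d+2$), and $\lambdal_{d,1}^{(1)}$ is the appropriate term of the nondecreasing rearrangement of the two zeroes of the quadratic $w^{(1)}(\lambda) = (\mu_0 - \lambda)(\mu_1 - \lambda) I_{0,0} + \lambda \sigma_0 (\mu_1 - \lambda) + \lambda \sigma_1 (\mu_0 - \lambda)$ together with the number $\mu_2$. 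The only transcendental ingredient is $I_{0,0}$; since $P_0(x) - P_1(x) = \tfrac{d+\alpha+2}{d}\,|x|^2$, a pass to polar coordinates and the substitution $t = |x|^2$ reduce $I_{0,0}$ to a constant multiple of $\int_0^1 t^{d/2+1}\bigl((1-t)^{-\alpha/2}-1\bigr)^{-1}\,dt$. For $\alpha = 1$ this is elementary --- using $\bigl((1-t)^{-1/2}-1\bigr)^{-1} = (\sqrt{1-t}+1-t)/t$ it equals $B(\tfrac d2+1,\tfrac32) + B(\tfrac d2+1,2)$ --- and similarly for $\alpha = 2$, while for general $\alpha$ it equals the convergent series $\sum_{k\ge1} B(\tfrac d2+2,\tfrac{k\alpha}{2}+1)$. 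Hence for $\alpha = 1$, $1 \le d \le 9$ the inequality $\lambdau_{d+2,0}^{(2)} < \lambdal_{d,1}^{(1)}$ is one explicit numerical comparison for each $d$, and for $1 \le d \le 2$, $0 < \alpha \le 2$ it is a single inequality in $\alpha$ to be checked throughout $(0,2]$.

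\emph{Main obstacle.} The reduction is formal; the work is the last inequality, and it is delicate. The crudest bounds do not suffice --- pairing $\lambda_{d,1} \ge \mu_1$ with $\lambda_{d+2,0} \le \lambdau_{d+2,0}^{(1)} = A_{0,0}/B_{0,0}$ gives a ratio of Gamma functions exceeding $1$ for $d \ge 2$ at $\alpha = 1$ (it equals $\tfrac{16}{15}$ when $d = 2$) and for some $\alpha \in (0,2)$ already at $d = 1$ --- so the degree-two bounds are genuinely needed, and at $\alpha = 1$ they separate $\lambda_{d+2,0}$ from $\lambda_{d,1}$ only barely by $d = 9$; beyond that (or for $\alpha \ne 1$, $d \ge 3$, where $I_{0,0}$ ceases to be elementary) one would need higher-degree polynomials, for which the manipulations become unwieldy, so only numerical evidence is available. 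The actual effort thus lies in (a) solving and estimating the two quadratics with their Gamma-function coefficients, (b) evaluating --- or, for free $\alpha$ and $d\le2$, usefully estimating --- the integral $I_{0,0}$, and (c) tracking whether the lower bound in Theorem~\ref{th:eigenvalues}(ii) is the larger zero of $w^{(1)}(\lambda)$ or the number $\mu_2$, which genuinely depends on $d$ and $\alpha$ (cf.\ the remark after Theorem~\ref{th:eigenvalues}); with these in hand the remaining verifications are tedious but routine.
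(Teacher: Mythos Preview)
Your proposal is correct and follows essentially the same route as the paper: reduce via Proposition~\ref{prop:bochner} to the scalar inequality $\lambda_{d+2,0} < \lambda_{d,1}$, and then establish $\lambdau_{d+2,0}^{(2)} < \lambdal_{d,1}^{(1)}$ from Theorem~\ref{th:eigenvalues} using only $2\times 2$ matrices, with the integral $I_{0,0}$ evaluated in closed form for $\alpha=1$ and suitably estimated for general $\alpha$ when $d\le 2$. The paper's Section~\ref{sec:analytic} carries out exactly these verifications; for $\alpha=1$ it finds the uniform linear separator $\tfrac{3\pi}{16}(d+5)$ (rather than checking the nine dimensions one by one), and for $d\le 2$ it replaces $I_{0,0}$ by the concavity bound~\eqref{eq:J} and reduces everything to the sign of a single quadratic in a gamma-ratio variable, handled via two short monotonicity lemmas.

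One small methodological difference worth noting: your argument for the strict monotonicity $\lambda_{d,0} < \lambda_{d+2,0}$ uses only the Bochner correspondence together with simplicity and sign-definiteness of the ground state, whereas the paper (Section~\ref{sec:eigenvalues}) instead compares with the cylinder $D\times[-1,1]$ and invokes domain monotonicity to get $\lambda_{d,0} < \lambda_{d+1,0}$. Your version is more self-contained (no appeal to domain monotonicity for the fractional Laplacian) and gives exactly the step-two monotonicity needed here; the paper's version yields the slightly stronger step-one monotonicity.
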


\mk{By Proposition~\ref{prop:bochner}, the solutions of~\eqref{eq:problem} for the unit ball $D$ are the numbers $\lambda_{d+2l,n}$, where $l, n \ge 0$. By definition, $\lambda_{d+2l,n}$ is nondecreasing in $n \ge 0$, and $\lambda_{d+2l,n} > \lambda_{d+2l,0}$ when $n > 0$. Furthermore, $\lambda_{d+2l,0}$ is strictly increasing in $l \ge 0$ (see Section~\ref{sec:eigenvalues}). Thus $\lambda_{d,0}$ is the smallest eigenvalue of the problem~\eqref{eq:problem}, and the only possible values of $\lambda_1$ are $\lambda_{d+2,0}$ and $\lambda_{d,1}$.}

\mk{In order to prove Theorem~\ref{th:antisymmetric}, it suffices to show that $\lambda_{d+2,0} < \lambda_{d,1}$. Indeed, then $\lambda_{d+2l,n} = \lambda_1$ only if $l = 1$ and $n = 0$ (by the argument used in the previous paragraph), and thus the eigenfunctions of~\eqref{eq:problem} with eigenvalue $\lambda_1$ are of the form $\ph(x) = V(x) f(|x|)$ for a solid harmonic polynomial $V$ of degree $l = 1$. This means that $V$ is a linear function, and so $\ph(-x) = V(-x) f(|x|) = -V(x) f(|x|) = -\ph(x)$, as desired.}

As mentioned above, \mk{the inequality $\lambda_{d+2,0} < \lambda_{d,1}$ (equivalent to Theorem~\ref{th:antisymmetric})} follows by evaluating analytically the bounds of Theorem~\ref{th:eigenvalues} for small values of $N$. \mk{More precisely, we prove in Section~\ref{sec:analytic} that $
\mk{\lambdau_{d+2,0}^{(2)}}
 < 
\mk{\lambdal_{d,1}^{(1)}}
$.}

Apparently the above bounds for $\lambda_{d+2,0}$ and $\lambda_{d,1}$ are sharp enough to assert that $\lambda_{d+2,0} < \lambda_{d,1}$ for all $d \le 9$ and $\alpha \in (0, 2]$, see Figure~\ref{fig:gap}; nevertheless, we were only able to overcome technical difficulties when $\alpha = 1$ or $d \le 2$. We remark that numerical simulations clearly indicate that $\lambda_{d+2,0} < \lambda_{d,1}$ for general $\alpha \in (0, 2]$ and $d \ge 1$ (which is a well-known result for $\alpha = 2$), in agreement with T.~Kulczycki's conjecture.



\MK{Theorem~\ref{th:antisymmetric} was known only for $d = 1$ and $\alpha \in [1, 2]$: the case $\alpha = 1$ was solved in~\cite{bib:bk04}, while an extension to $\alpha \in [1, 2]$ is one of the results of~\cite{bib:k12}. In both articles the proof reduces to sufficiently sharp bounds for $\lambda_{d,1}$ and $\lambda_{d+2,0}$.}


The numerical scheme of Theorem~\ref{th:eigenvalues} extends the one studied in~\cite{bib:kkms10}, where $d = 1$ and $\alpha = 1$ was considered. In this case the corresponding explicit expressions follow easily by harmonic extension and conformal mapping, and the Aronszajn method (called Weinstein--Aronszajn in this case) simplifies significantly.

\mk{According to numerical calculations, a}s long as $d$ is not very large, the 
\mk{rate of} 
convergence of both upper and lower bounds 
\mk{to the correct values of $\lambda_{d,n}$} 
is rather fast, at least when compared to other known methods (\cite{bib:dz14,bib:gm15,bib:zg14,bib:zrk07} for $d = 1$ and~\cite{bib:k12} for general $d$), see Tables~\ref{tab:eigenvalues0}--\ref{tab:eigenvalues2} and Figures~\ref{fig:eigenvalues}--\ref{fig:eigenvaluesdim}. For example, using just $40 \times 40$ matrices, one finds that $\lambda_{d,0} = 2.0061190327 \pm 3 \cdot 10^{-10}$ for $d = 2$ and $\alpha = 1$, and $\lambda_{d,0} = 2.754754742 \pm 10^{-9}$ for $d = 3$ and $\alpha = 1$.

The upper bounds are given as eigenvalues of a well-conditioned matrix and thus they can be easily computed in a numerically stable way. Lower bounds are more problematic, they require numerical evaluation of roots of a polynomial given by the determinant of a matrix with a parameter. Due to accumulation of numerical errors and singularities of the entries $W_{m,n}(\lambda)$, all calculations should be carried out with additional precision; see~\cite{bib:ws72} for a detailed discussion of the Aronszajn method in the classical context. 

As remarked above, our results are based on explicit expressions for the eigenvalues and eigenfunctions of the operator $f \mapsto (-\Delta)^{\alpha/2}(\omega f)$, where $\omega(x) = (1 - |x|^2)_+^{\alpha/2}$, found in~\cite{bib:dkk15}. Roughly speaking, the result states that for any polynomial $P$, the function $(-\Delta)^{\alpha/2}(\omega P)$ is equal in the unit ball to another polynomial of the same degree. \mk{This phenomenon was first observed in~\cite{bib:bik11,bib:d12} for radial (or radial times linear) functions, and extended to arbitrary polynomials in~\cite{bib:dkk15}.} Below we recall the result, restricted to the case of radial functions, and with a modified constant in the definition of $P_n$, which is more suitable for calculations.

\begin{theorem}[{Theorem~3 in~\cite{bib:dkk15}}]\label{th:jacobi}
Let $d \ge 1$, $n \ge 0$ and $\alpha \in (0, 2]$. Define $P_n$ and $\mu_n$ as in Theorem~\ref{th:eigenvalues}, and let $\omega(x) = (1 - |x|^2)_+^{\alpha/2}$. Then for $x$ \mk{in the unit ball in $\R^d$},
\formula[eq:jacobi]{
 (-\Delta)^{\alpha/2} (\omega P_n)(x) & = \mu_n P_n(x) .
}
\end{theorem}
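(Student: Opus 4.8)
The plan is to avoid computing $(-\Delta)^{\alpha/2}(\omega P_n)$ directly, and instead to identify it as an eigenfunction by a symmetry/orthogonality argument, so that the only genuine computation is that of one constant. The input I take for granted is the radial instance of the classical evaluation of the fractional Laplacian of a truncated power: for every integer $k \ge 0$ and every $x$ in the unit ball $D$,
\[
 (-\Delta)^{\alpha/2}\bigl[(1-|\cdot|^2)_+^{\alpha/2+k}\bigr](x) = \kappa_k \, {_2F_1}\bigl(\tfrac{d+\alpha}{2}, -k; \tfrac d2; |x|^2\bigr), \qquad \kappa_k = \frac{2^\alpha \Gamma(\tfrac{d+\alpha}{2}) \Gamma(\tfrac{\alpha}{2}+k+1)}{\Gamma(\tfrac d2)\, k!} .
\]
This is the case $p = \tfrac\alpha2 + k$ of the evaluation of $(-\Delta)^{\alpha/2}[(1-|\cdot|^2)_+^{p}]$ in the ball (for this $p$ the parameter $-p+\tfrac\alpha2 = -k$ terminates the hypergeometric series), and it appears in~\cite{bib:bik11,bib:d12}; it can also be obtained by reducing $(-\Delta)^{\alpha/2}$ of a radial function to a one-dimensional singular integral and computing it. For the argument below I only need the qualitative consequence that $(-\Delta)^{\alpha/2}[(1-|\cdot|^2)_+^{\alpha/2+k}]$ agrees in $D$ with a polynomial of degree at most $k$ in $|x|^2$, together with the value of $\kappa_n$.

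Let $\mathcal{P}_n = \operatorname{span}\{1, |x|^2, \dots, |x|^{2n}\}$ and define $T f = (-\Delta)^{\alpha/2}(\omega f)\big|_D$ on $\bigcup_n \mathcal P_n$. Expanding a member of $\mathcal P_n$ in the functions $(1-|x|^2)^k$, $0 \le k \le n$, so that $\omega f$ becomes a combination of $(1-|\cdot|^2)_+^{\alpha/2+k}$, the building block shows that $T$ maps $\mathcal P_n$ into itself. Moreover $T$ is symmetric for the inner product of $L^2(D, \omega\, dx)$: for polynomials $f, g$ the functions $\omega f, \omega g$ are bounded, supported in $\overline D$, and lie in $H^{\alpha/2}(\R^d)$ (in $H^1_0(D)$ if $\alpha = 2$), so that
\[
 \langle T f, g\rangle_{L^2(D,\omega)} = \int_{\R^d} (\omega g)\, (-\Delta)^{\alpha/2}(\omega f)\, dx = \int_{\R^d} (\omega f)\, (-\Delta)^{\alpha/2}(\omega g)\, dx = \langle f, T g\rangle_{L^2(D,\omega)} ,
\]
the middle step being the self-adjointness of $(-\Delta)^{\alpha/2}$ (symmetry of its Dirichlet form).

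Next, the $P_n$ are, up to scalars, the orthogonalization of $1, |x|^2, |x|^4, \dots$ in $L^2(D, \omega\, dx)$: the change of variable $t = 2|x|^2 - 1$ and passage to radial coordinates turn $\int_D P_m P_n \, \omega\, dx$ into a constant times the orthogonality integral $\int_{-1}^1 P^{(\alpha/2,d/2-1)}_m(t) P^{(\alpha/2,d/2-1)}_n(t) (1-t)^{\alpha/2}(1+t)^{d/2-1}\, dt$ of Jacobi polynomials, so $P_n \perp \mathcal P_{n-1}$. Combining: $T P_n \in \mathcal P_n$, and for $m < n$ one has $\langle T P_n, P_m\rangle_{L^2(D,\omega)} = \langle P_n, T P_m\rangle_{L^2(D,\omega)} = 0$ since $T P_m \in \mathcal P_m \subseteq \mathcal P_{n-1}$; hence $T P_n \perp \mathcal P_{n-1}$ and therefore $T P_n = c_n P_n$ for some scalar $c_n$. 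To find $c_n$, write $P_n = \sum_{k=0}^n b_k (1-|x|^2)^k$: the coefficient of $|x|^{2n}$ in $P_n$ equals $(-1)^n b_n$, whereas in $T P_n = \sum_{k=0}^n b_k \kappa_k \, {_2F_1}(\tfrac{d+\alpha}{2}, -k; \tfrac d2; |x|^2)$ only the term $k=n$ reaches degree $n$, contributing $b_n \kappa_n (\tfrac{d+\alpha}{2})_n (-n)_n / ((\tfrac d2)_n \, n!)$. Dividing, and using $(-n)_n = (-1)^n n!$, gives $c_n = \kappa_n (\tfrac{d+\alpha}{2})_n / (\tfrac d2)_n$, which collapses to $\mu_n$ after cancelling Gamma factors.

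The main obstacle is the building block itself: the fact that $(-\Delta)^{\alpha/2}$ applied to the truncated power $(1-|\cdot|^2)_+^{\alpha/2+k}$ is again a polynomial inside the ball, and the identification of the constant $\kappa_k$; a secondary (standard, though somewhat technical) point is checking that $\omega$ times a polynomial lies in the form domain $H^{\alpha/2}(\R^d)$, which is needed for the self-adjointness step and is delicate because $\omega$ is only H\"older continuous of exponent $\alpha/2$ at $\partial D$. An alternative to the orthogonality argument is to substitute the building block straight into $(-\Delta)^{\alpha/2}(\omega P_n) = \sum_k b_k \kappa_k \, {_2F_1}(\tfrac{d+\alpha}{2}, -k; \tfrac d2; |x|^2)$ and sum the series to $\mu_n \, {_2F_1}(-n, \tfrac{d+\alpha}{2}+n; \tfrac d2; |x|^2)$ using a hypergeometric transformation; the route above sidesteps that identity.
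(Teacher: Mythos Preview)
The present paper does not prove this statement at all; it is quoted from the companion paper~\cite{bib:dkk15} and used as a black box, so there is no in-paper proof to compare against.

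Your argument is correct and is a genuinely different route from the direct special-function computation that~\cite{bib:dkk15} carries out (via Mellin transforms and Meijer $G$-function identities, as its title indicates). You take the single evaluation of $(-\Delta)^{\alpha/2}[(1-|\cdot|^2)_+^{\alpha/2+k}]$ from~\cite{bib:d12} as input and replace the remaining hypergeometric summation by a structural observation: the map $f \mapsto (-\Delta)^{\alpha/2}(\omega f)\big|_D$ preserves the filtration $\mathcal P_0 \subset \mathcal P_1 \subset \cdots$ and is symmetric on $L^2(D,\omega)$, hence is diagonalised by the orthogonal polynomials for that weight, which are exactly the $P_n$. This is the standard mechanism by which any symmetric operator respecting the polynomial degree filtration is diagonal in the associated orthogonal-polynomial basis; it hands you the eigenfunction statement for free and leaves only one leading coefficient to match. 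The direct approach in~\cite{bib:dkk15} (or the hypergeometric transformation you sketch at the end) instead computes the full image and recognises it as $\mu_n P_n$: heavier on identities, but self-contained and yielding the non-radial version as well. Of the two caveats you flag, the building block is indeed the substantive input (and is exactly what~\cite{bib:bik11,bib:d12} supply), while membership of $\omega P_n$ in the form domain is handled in this very paper, in the Rayleigh--Ritz subsection of Section~\ref{sec:eigenvalues}, via the Green operator argument, independently of Theorem~\ref{th:jacobi}.
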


We remark that the polynomials $P_n$ form a complete orthogonal system in $L^2_\rad(D, \omega)$, the weighted $L^2$ space of radial functions in $D$, with weight function $\omega$. A similar system for the full $L^2(D, \omega)$ space is given in the original statement in~\cite{bib:dkk15}.

\medskip

We conclude the introduction with the outline of the article. \MK{In Section~\ref{sec:notation} we introduce additional notation related to the polynomials $P_n$, and prove some preliminary identities and estimates. Theorem~\ref{th:eigenvalues} is proved in Section~\ref{sec:eigenvalues}, while Section~\ref{sec:analytic} contains the proof of Theorem~\ref{th:antisymmetric}.}
%
%

\section{Notation}
\label{sec:notation}

We use the notation of Theorems~\ref{th:eigenvalues} and~\ref{th:jacobi}. Recall that $\omega(x) = (1 - |x|^2)_+^{\alpha/2}$, and that $\sqrt{\omega} \, P_n$ form a complete orthogonal set in the space of radial $L^2(D)$ functions, denoted here and below by $L^2_\rad(D)$. We let
\formula[eq:pnnorm]{
 \sigma_n = \|\sqrt{\omega} \, P_n\|_2^2 & = \frac{2 \pi^{d/2}}{\Gamma(\tfrac{d}{2})} \biggl(\frac{n! \Gamma(\tfrac{d}{2})}{\Gamma(\tfrac{d}{2} + n)}\biggr)^{\!\!2} \int_0^1 r^{d-1} (1 - r^2)^{\alpha/2} (P^{(\alpha/2, d/2 - 1)}_n(2 r^2 - 1))^2 dr \\
 & = \frac{\pi^{d/2} (n!)^2 \Gamma(\tfrac{d}{2})}{\mk{2^{(d + \alpha)/2}} (\Gamma(\tfrac{d}{2} + n))^2} \int_{-1}^1 (1 + s)^{d/2-1} (1 - s)^{\alpha/2} (P^{(\alpha/2, d/2 - 1)}_n(s))^2 ds \\
 & = \frac{\pi^{d/2} n! \Gamma(\tfrac{d}{2}) \Gamma(\tfrac{\alpha}{2} + n + 1)}{(\tfrac{d + \alpha}{2} + 2 n) \Gamma(\tfrac{d}{2} + n) \Gamma(\tfrac{d + \alpha}{2} + n)} \, ,
}
see formula~7.391.1 in~\cite{bib:gr07}. We also define
\formula[eq:pnm]{
 \pi_{m,n} & = \langle \omega P_m, \omega P_n \rangle \\
 & = \frac{(-1)^{m + n} \pi^{d/2} m! n! \Gamma(\tfrac{d}{2})}{\mk{2^{d/2 + \alpha}} \Gamma(\tfrac{d}{2} + m) \Gamma(\tfrac{d}{2} + n)} \int_{-1}^1 (1 + s)^{d/2-1} (1 - s)^\alpha P^{(\alpha/2, d/2 - 1)}_n(s) P^{(\alpha/2, d/2 - 1)}_m(s) ds \\
 & = \frac{\mk{\pi^{d/2}} \Gamma(\alpha + 1) \Gamma(\tfrac{d}{2}) \Gamma(\tfrac{d}{2} + m + n) \Gamma(\tfrac{\alpha}{2} + m + 1) \Gamma(\tfrac{\alpha}{2} + n + 1)}{\Gamma(\tfrac{\alpha}{2} + m - n + 1) \Gamma(\tfrac{\alpha}{2} + n - m + 1) \Gamma(\tfrac{d}{2} + m) \Gamma(\tfrac{d}{2} + n) \Gamma(\tfrac{d}{2} + \alpha + m + n + 1)} \, ,
}
see formula~16.4(1\mk{7}) in~\cite{bib:e54} (for $n = m$ this is formula~7.391.6 in~\cite{bib:gr07}).
Finally, in the proof of Theorem~\ref{th:antisymmetric}, the integral
\formula[eq:int:i]{
 I = I_{0,0} & = \int_D \frac{(1 - P_1(x))^2}{(1 - |x|^2)^{-\alpha/2} - 1} \, dx
}
plays an important role. Since $P_1(x) = 1 - \tfrac{d + \alpha + 2}{d} |x|^2$, we have
\formula[eq:int:i2]{
 I & = \frac{(d + \alpha + 2)^2}{d^2} \int_D \frac{|x|^4}{(1 - |x|^2)^{-\alpha/2} - 1} \, dx \\
 & = \frac{(d + \alpha + 2)^2 \pi^{d/2}}{d^2 \Gamma(\tfrac{d}{2})} \int_0^1 \frac{s^2}{(1 - s)^{-\alpha/2} - 1} \, s^{d/2 - 1} ds \\
 & = \frac{(d + \alpha + 2)^2 \pi^{d/2}}{d^2 \Gamma(\tfrac{d}{2})} \int_0^1 \frac{t^{\alpha/2} (1 - t)^{d/2 + 1}}{1 - t^{\alpha/2}} \, dt \\
 & = \frac{(d + \alpha + 2)^2 \pi^{d/2}}{d^2 \Gamma(\tfrac{d}{2})} \sum_{j = 1}^\infty \int_0^1 t^{j \alpha/2} (1 - t)^{d/2 + 1} dt \\
 & = \frac{(d + \alpha + 2)^2 \pi^{d/2}}{d^2 \Gamma(\tfrac{d}{2})} \sum_{j = 1}^\infty \frac{\Gamma(\tfrac{j \alpha}{2} + 1) \Gamma(\tfrac{d}{2} + 2)}{\Gamma(\tfrac{j \alpha}{2} + \tfrac{d}{2} + 3)} \\
 & = \frac{(d + 2) (d + \alpha + 2)^2 \pi^{d/2}}{4 d} \sum_{j = 1}^\infty \frac{\Gamma(\tfrac{j \alpha}{2} + 1)}{\Gamma(\tfrac{j \alpha}{2} + \tfrac{d}{2} + 3)} \, .
}
By a direct calculation,
\formula[eq:int:i:est]{
 \sigma_0 + \sigma_1 & =  \frac{ (d + 2) (d + \alpha + 2)^2 \pi^{d/2} \Gamma(\tfrac{\alpha}{2} + 1)}{4 d \, \Gamma(\tfrac{d + \alpha}{2} + 3)} < I .
}
The lower bounds for $I$ can be obtained by truncating the series in~\eqref{eq:int:i2}. We present two different upper bounds. The first one uses 
\mk{convexity} 
of $\log \Gamma(z)$: we have
\formula{
 \log \Gamma(s + \tfrac{d}{2} + 3) - \log \Gamma(s + 1) \ge (\tfrac{d}{2} + 2) (\log \Gamma(s + 2) - \log \Gamma(s + 1)) ,
}
and therefore
\formula[eq:zeta]{
 I & \le \frac{(d + 2) (d + \alpha + 2)^2 \pi^{d/2}}{4 d} \expr{\sum_{j = 1}^J \frac{\Gamma(\tfrac{j \alpha}{2} + 1)}{\Gamma(\tfrac{j \alpha}{2} + \tfrac{d}{2} + 3)} + \sum_{j = J + 1}^\infty (1 + \tfrac{j \alpha}{2})^{-d/2 - 2}} \\
 & \le \frac{(d + 2) (d + \alpha + 2)^2 \pi^{d/2}}{4 d} \expr{\sum_{j = 1}^J \frac{\Gamma(\tfrac{j \alpha}{2} + 1)}{\Gamma(\tfrac{j \alpha}{2} + \tfrac{d}{2} + 3)} + \int_J^\infty (1 + \tfrac{s \alpha}{2})^{-d/2 - 2} ds} \\
 & = \frac{(d + 2) (d + \alpha + 2)^2 \pi^{d/2}}{4 d} \expr{\sum_{j = 1}^J \frac{\Gamma(\tfrac{j \alpha}{2} + 1)}{\Gamma(\tfrac{j \alpha}{2} + \tfrac{d}{2} + 3)} + \frac{4}{(d + 2) \alpha (1 + \tfrac{J \alpha}{2})^{d/2 + 1}}} .
}
We remark that a similar method can be used to find numerical estimates of $I_{m,n}$ for general $m$ and $n$. A different upper bound, which will be used in Section~\ref{subsec:d2}, is obtained by estimating the function under the integral in the definition of $I$: the function $t^{\alpha/2}$ is concave, and hence $(1 - t^{\alpha/2}) / (1 - t) \ge \tfrac{\alpha}{2}$. Therefore,
\formula[eq:J]{
 I & = \frac{(d + \alpha + 2)^2 \pi^{d/2}}{d^2 \Gamma(\tfrac{d}{2})} \int_0^1 \frac{t^{\alpha/2} (1 - t)^{d/2 + 1}}{1 - t^{\alpha/2}} \, dt \\
 &\leq \frac{(d + \alpha + 2)^2 \pi^{d/2}}{ d^2 \Gamma(\tfrac{d}{2})}
\frac{2}{\alpha} B\Big(\frac{d}{2}+1, \frac{\alpha}{2}+1\Big) 
= \frac{2(d+\alpha+2)\sigma_0}{\alpha d} . 
}

\section{Numerical scheme}
\label{sec:eigenvalues}

In this section we prove Theorem~\ref{th:eigenvalues}. We begin with the well-known variational characterisation of the eigenvalues $\lambda_{d,n}$, and then describe the application of Rayleigh--Ritz and Aronszajn methods. Noteworthy, extensions to $\alpha > 2$ are possible, and our estimates are in fact valid for all $\alpha > 0$. However, we will restrict our attention to the more important and much better understood case $\alpha \in (0, 2]$. 

Let $\form(f, g)$, $f, g \in \dom(\form)$, denote the Dirichlet form associated with $(-\Delta)^{\alpha/2}$ in $D$ (restricted to the space $L^2_\rad(D)$ of radial functions), with Dirichlet condition in $\R^d \setminus D$. That is, for $\alpha \in (0, 2)$, we have
\formula{
 \form(f, g) & = \frac{2^{\alpha - 1} \Gamma(\tfrac{d + \alpha}{2})}{\pi^{d/2} 
\mk{|\Gamma(-\tfrac{\alpha}{2})|}
} \int_{\R^d} \int_{\R^d} \frac{(f(x) - f(y)) (\overline{g(x)} - \overline{g(y)})}{|x - y|^{d + \alpha}} \, dx \, dy , \\
 \dom(\form) & = \set{ f \in L^2_\rad(D) : \int_{\R^d} \int_{\R^d} \frac{|f(x) - f(y)|^2}{|x - y|^{d + \alpha}} \, dx \, dy < \infty } ,
}
where all functions $f \in L^2_\rad(D)$ are extended to $\R^d$ so that $f(x) = 0$ for $x \notin D$, while for $\alpha = 2$ we have the usual energy form
\formula{
 \form(f, g) & = \int_D \nabla f(x) \cdot \overline{\nabla g(x)} dx ,
}
defined on the Sobolev space $W^{1,2}_0(D)$. For further information about the above Dirichlet forms and related objects, we refer the reader to~\cite{bib:dpv12,bib:s70}. A general account on Dirichlet forms can be found in~\cite{bib:fot11}.

Define the Rayleigh quotient
\formula{
 Q(f) & = \frac{\form(f, f)}{\|f\|_2^2}
}
for $f \in \dom(\form)$, $f \ne 0$, and $Q(f) = 0$ for $f = 0$. By the variational principle, the non-decreasing sequence $\lambda_{
\mk{d}
,n}$, $n \ge 0$, of the eigenvalues of $(-\Delta)^{\alpha/2}$ in $D$, restricted to the subspace $L^2_\rad(D)$ of radial functions, is equal to
\formula{
 \lambda_{
\mk{d}
,n} & = \inf \set{\sup \set{Q(f) : f \in E } : \text{$E$ is a $(n + 1)$-dimensional subspace of $\dom(\form)$}} .
}
We note that $\lambda_{
\mk{d}
,0}$ \mk{strictly} increases with the dimension $d$. Indeed, 
\mk{let $f \in W^{1,2}_0(D \times [-1, 1])$, where $D$ is the unit ball in $\R^d$ (so $f$ is defined on a domain in $\R^{d+1}$), and let $g(x) = \int_{-1}^1 f(x,y) dy$. Then $g \in W^{1,2}_0(D)$, and it is easy to see that $Q(f) \ge Q(g)$ (where the two Rayleigh quotients are defined on $D \times [-1, 1]$ and $D$, respectively), and thus $Q(f) \ge \lambda_{d,0}$. It follows that the smallest eigenvalue of the problem~\eqref{eq:problem} in $D \times [-1, 1]$ is not less than $\lambda_{d,0}$. By domain monotonicity, $\lambda_{d+1,0} > \lambda_{d,0}$, that is, $\lambda_{d,0}$ is indeed strictly increasing in $d \ge 1$.} We remark that monotonicity in $d$ of $\lambda_{d,n}$ for $n \ge 1$ appears to be an open question.

\subsection{Upper bounds}

For the upper bounds for $\lambda_{
\mk{d}
,n}$, we use the standard Rayleigh--Ritz method. Let $f_n = \omega P_n$ for $n \ge 0$. Then $f_n \in \dom(\form)$ and $\form(f_m, f_n) = \langle f_m, (-\Delta)^{\alpha/2} f_n \rangle$, with $(-\Delta)^{\alpha/2} f_n$ defined pointwise. \MK{The proof of this fact is standard, but somewhat complicated: if $G_D$ denotes the Green operator for $(-\Delta)^{\alpha/2}$ in the unit ball (for more information about the Green operator in this context, see, for example, \cite{bib:b99,bib:bb99,bib:l72,bib:r38a}), then $g = f_n - G_D (-\Delta)^{\alpha/2} f_n$ is continuous in $\R^d$, zero outside $D$ and it satisfies $(-\Delta)^{\alpha/2} g(x) = 0$ for all $x \in D$. By the results of~\cite{bib:bb99}, $g$ is everywhere zero, and hence $f_n = G_D (-\Delta)^{\alpha/2} f_n$\mk{. In particular, $f_n$ belongs to the $L^2$ domain of $(-\Delta)^{\alpha/2}$ in $D$, and thus $f_n \in \dom(\form)$ and $\form(f_m, f_n) = \langle f_m, (-\Delta)^{\alpha/2} f_n \rangle$, as desired.}}

It follows that
\formula{
 \form(f_m, f_n) & = \langle f_m, (-\Delta)^{\alpha/2} f_n \rangle = \mu_n \langle \omega P_m, P_n \rangle = \delta_{m,n} \mu_n \sigma_n ,
}
where $\delta_{m,n} = 1$ if $n = m$, $\delta_{m,n} = 0$ otherwise. On the other hand, $\langle f_m, f_n \rangle = \pi_{m,n}$. Let $\mk{\lambdau_{d,n}^{(N)}}$, $0 \le n < N$, be the non-decreasing sequence of the solutions of the $N \times N$ matrix eigenvalue problem $A^{(N)} x = \lambda B^{(N)} x$, where the entries of $A^{(N)}$ and $B^{(N)}$ are given by $A_{m,n} = \delta_{m,n} \mu_n \sigma_n$ and $B_{m,n} = \pi_{m,n}$, with $0 \le m,n < N$. By the variational characterisation, we have $\lambda_{
\mk{d}
,n} \le \mk{\lambdau_{d,n}^{(N)}}$ when $0 \le n < N$. This is precisely the first part of Theorem~\ref{th:eigenvalues}.

\subsection{Lower bounds}

The lower bounds for $\lambda_{
\mk{d}
,n}$ are found using the Aronszajn method of intermediate problems, see e.g.~\cite{bib:b87}. Since this is not as well-known as the Rayleigh--Ritz method, we provide a short general description. Consider the eigenvalue problem $\A f = \lambda \B f$ for non-negative definite operators $\A$, $\B$. In our case, $\A$ is the fractional Laplace operator $(-\Delta)^{\alpha/2}$ in the unit ball $D$, and $\B$ is the identity operator. Suppose furthermore, that the solutions of a different eigenvalue problem $\A f = \lambda \B^{(0)}
 \mk{f}
$, the so-called \emph{base eigenvalue problem}, are known explicitly. Here $\B^{(0)}$ is considered to be a perturbation of $\B$, and $\tilde{\B} = \B^{(0)} - \B$ is assumed to be non-negative definite. In our case, $\B^{(0)} f = \omega^{-1} f$ (here and below we understand that $\omega^{-1}(x) = (1 - |x|^2)^{-\alpha/2}$ for $x \in D$; we will never use this symbol for $x \notin D$), and $f_n = \omega P_n$ are the eigenfunctions of the base problem, with corresponding eigenvalues $\mu_n$. By the variational characterisation, we have the basic lower bound $\lambda_{
\mk{d}
,n} \ge \mu_n$ for $n \ge 0$.

Improved lower bounds for $\lambda_{
\mk{d}
,n}$ are found by solving the \emph{intermediate eigenvalue problem} $\A f = \lambda \B^{(N)} f$, where the intermediate operator is defined by
\formula{
 \B^{(N)} f & = \B^{(0)} f - \sum_{m = 0}^{N - 1} \sum_{n = 0}^{N - 1} \langle f, \tilde{\B} g_m \rangle G_{m,n}^{(N)} \tilde{\B} g_n ;
}
here $g_n$ is a sequence of appropriately chosen linearly independent test functions, and $G_{m,n}^{(N)}$ are the entries of the matrix inverse to the Gramian matrix
\formula{
 (\langle g_m, \tilde{\B} g_n \rangle : 0 \le m,n < N) .
}
More precisely, if $\mk{\lambdal_{d,n}^{(N)}}$ denotes the non-decreasing sequence of eigenvalues of the intermediate problem, then $\lambda_{
\mk{d}
,n} \ge \mk{\lambdal_{d,n}^{(N)}}$.

Note that the intermediate problem with $N = 0$ is simply the base problem, with $
\mk{\lambdal_{d,n}^{(0)}}
 = \mu_n$\mk{. Furthermore,} $\B^{(N)}$ is a finite rank perturbation of~$\B^{(0)}$. \mk{More precisely, $f^{(N)} = \sum_{m = 0}^{N - 1} \sum_{n = 0}^{N - 1} \langle f, \tilde{\B} g_m \rangle G_{m,n}^{(N)} g_n$ is the projection of $f$ onto the linear span of $g_n$, $0 \le n < N$ (in fact, an orthogonal projection with respect to the quadratic form of $\tilde{\B}$). Therefore, $\B^{(0)} f - \B^{(N)} f = \tilde{\B} f^{(N)}$ is a projection of $\tilde{\B} f$ onto the linear span of $\tilde{\B} g_n$, $0 \le n < N$. Hence, $\B^{(0)} f - \B^{(N)} f$ converges $\tilde{\B} f$ as $N \to \infty$ (under appropriate assumptions on the choice of $g_n$), and so $\B^{(N)} f$ converges to $\B f$. This can be proved formally and used to show that the eigenvalues of the intermediate problems converge as $N \to \infty$ to the eigenvalues of the original problem, but we will not need this result.}

\mk{The intermediate eigenvalue problem $\A f = \lambda \B^{(N)} f$ can be written as
\formula{
 (\A - \lambda \B^{(0)}) f + \lambda \sum_{m = 0}^{N - 1} \sum_{n = 0}^{N - 1} \langle f, \tilde{\B} g_m \rangle G_{m,n}^{(N)} \tilde{\B} g_n = 0 .
}
Fix $\lambda$ such that $\A - \lambda \B^{(0)}$ is invertible. In this case the above equation reads
\formula{
 f + \lambda \sum_{m = 0}^{N - 1} \sum_{n = 0}^{N - 1} \langle f, \tilde{\B} g_m \rangle G_{m,n}^{(N)} (\A - \lambda \B^{(0)})^{-1} \tilde{\B} g_n = 0.
}
Assuming that $f$ is a linear combination of $g_m$, $0 \le m < N$, and taking the inner product with $\tilde{\B} g_n$, $0 \le n < N$, we obtain a system of linear equations. The coefficients of these equations form the $N \times N$ Weinstein--Aronszajn matrix} 
$W^{(N)}(\lambda)$, whose entries are given by
\formula{
 W_{m,n}(\lambda) & = \langle g_m + \lambda (\A - \lambda \B
^{\mk{(0)}}
)^{-1} \tilde{\B} g_m, \tilde{\B} g_n \rangle \mk{,}
}
with $0 \le m,n < N$. \mk{In particular, if $W^{(N)}(\lambda)$ is singular, then $\lambda$ is an eigenvalue of the intermediate problem.} More precisely, Aronszajn's theorem states that \mk{(for any $\lambda$)} the multiplicity $\mk{m}^{(N)}(\lambda)$ of an eigenvalue $\lambda$ of the intermediate problem satisfies
\formula{
 \mk{m}^{(N)}(\lambda) & = \mk{m}^{(0)}(\lambda) + \deg \det W^{(N)}(\lambda) ,
}
where $\deg \det W^{(N)}(\lambda)$ denotes the smallest (possibly negative) exponent corresponding to a non-zero term in the Laurent series expansion of $\det W^{(N)}$ around $\lambda$.

Typically, one chooses $g_n$ so that $W_{m,n}(\lambda)$ is easily computed. This is the case when $(\A - \lambda \B
^{\mk{(0)}}
)^{-1} \tilde{\B} g_n$ is a linear combination of the eigenfunctions $f_m$ of the base problem, that is, $\tilde{\B} g_n$ is a linear combination of $\B
^{\mk{(0)}}
 f_m$.

In our case $\B
^{\mk{(0)}}
 f_m = P_m$ and $\tilde{\B} g = (\omega^{-1} - 1) g$, so it is convenient to choose $g_n$ so that $(\omega^{-1} - 1) g_n$ are linear combinations of $P_m$. We cannot take simply $g_n = (\omega^{-1} - 1)^{-1} P_n$ due to a singularity at $0$. To cancel out this singularity, we let
\formula{
 g_n & = \frac{P_n - P_{n+1}}{\omega^{-1} - 1} \, .
}
It follows that
\formula{
 \lambda (\A - \lambda \B
^{\mk{(0)}}
)^{-1} \tilde{\B} g_n & = \frac{\lambda}{\mu_n - \lambda} \, \omega P_n - \frac{\lambda}{\mu_{n+1} - \lambda} \, \omega P_{n+1} ,
}
and therefore
\formula{
 W_{m,n}(\lambda) & = \int_D \frac{(P_m(x) - P_{m+1}(x)) (P_n(x) - P_{n+1}(x))}{(1 - |x|^2)^{-\alpha/2} - 1} \, dx \\
 & + \frac{\lambda \delta_{m,n} \sigma_n}{\mu_m - \lambda} - \frac{\lambda \delta_{m+1,n} \sigma_n}{\mu_{m+1} - \lambda} - \frac{\lambda \delta_{m,n+1} \sigma_{n+1}}{\mu_m - \lambda} + \frac{\lambda \delta_{m+1,n+1} \sigma_{n+1}}{\mu_{m+1} - \lambda} \, .
}
The eigenvalues of the base problem are given by $\mu_n$. It is easily proved that
\formula{
 w^{(N)}(\lambda) & = \expr{\prod_{n = 0}^N (\mu_n - \lambda)} \det W^{(N)}(\lambda)
}
is a polynomial of degree $N + 1$ in $\lambda$. Hence, $\mk{\lambdal_{d,n}^{(N)}}$, $n \ge 0$, is a sequence that consists of the $N + 1$ zeroes of $w^{(N)}(\lambda)$ and all $\mu_n$ for $n > N$, arranged in a non-decreasing order. This proves the other part of Theorem~\ref{th:eigenvalues}.

%
%

\section{Analytical bounds}
\label{sec:analytic}

In this final part we apply Theorem~\ref{th:eigenvalues} to find analytical bounds for the two smallest eigenvalues that correspond to radial eigenfunctions. These bounds are then used to prove Theorem~\ref{th:antisymmetric}.

Recall that $\lambda_{d,0}$ increases with the dimension $d$. By Proposition~\ref{prop:bochner}, the second smallest eigenvalue is equal to either $\lambda_{d+2,0}$ (when the corresponding eigenfunction is antisymmetric) or $\lambda_{d,1}$ (when it is radial). Figure~\ref{fig:gap} suggests that if $d \le 9$ and $\alpha \in (0, 2]$, then the bounds obtained above satisfy $
\mk{\lambdau_{d+2,0}^{(2)}}
 < 
\mk{\lambdal_{d,1}^{(1)}}
$, which clearly implies that $\lambda_{d+2,0} < \lambda_{d,1}$. This would extend Theorem~\ref{th:antisymmetric} to $d \le 9$ and $\alpha \in (0, 2]$. However, we could not overcome the technical details unless $\alpha = 1$ or $d \le 2$, the case covered by Theorem~\ref{th:antisymmetric}. We are, however, convinced that at least a computer-assisted proof can be given for $\alpha \in (0, 2]$ and $d \le 9$.

We first consider the general case. The upper bounds of Theorem~\ref{th:eigenvalues} for $N = 2$ are the solutions of the $2 \times 2$ matrix eigenvalue problem 
\formula{
 \begin{pmatrix} \mu_0 \sigma_0 & 0 \\ 0 & \mu_1 \sigma_1 \end{pmatrix} x & = \lambda \begin{pmatrix} \pi_{0,0} & \pi_{0,1} \\ \pi_{1,0} & \pi_{1,1} \end{pmatrix} x ,
}
that is, the solutions of
\formula[eq:upper:eq]{
 (\mu_0 \sigma_0 - \pi_{0,0} \lambda) (\mu_1 \sigma_1 - \pi_{1,1} \lambda) - \pi_{0,1}^2 \lambda^2 = 0
}
(note that $\pi_{0,1} = \pi_{1,0}$). Hence,
\formula[eq:upper:an]{
\mk{\lambdau_{d,0}^{(2)}}
 & = \frac{\KK - \sqrt{\KK^2 - 4 \mu_0 \mu_1 \sigma_0 \sigma_1 \LL}}{2 \LL} \, , & 
\mk{\lambdau_{d,1}^{(2)}}
 & = \frac{\KK + \sqrt{\KK^2 - 4 \mu_0 \mu_1 \sigma_0 \sigma_1 \LL}}{2 \LL} \, ,
}
with $\KK = \mu_0 \sigma_0 \pi_{1,1} + \mu_1 \sigma_1 \pi_{0,0}$ and $\LL = \pi_{0,0} \pi_{1,1} - \pi_{0,1}^2$. After simplification, we obtain
\formula{
\mk{\lambdau_{d,0}^{(2)}}
 & = \frac{\mu_0 \Gamma(\tfrac{\alpha}{2}+2) \Gamma(\tfrac{d}{2} + \alpha +2) (\MM - \sqrt{\MM^2 - 4d(d+\alpha)(\alpha+1)(d+\alpha+4)(d+2\alpha+4)})}{4d\Gamma(\tfrac{d+\alpha}{2}+3) \Gamma(\alpha+2)} \, ,
}
where $\MM = (\alpha+2)(d^2+2\alpha d+4d +2\alpha^2 + 2\alpha)$. By a straightforward, but lengthy calculation we find that
\formula[eq:upper:W]{
\mk{\lambdau_{d,0}^{(2)}}
 & \le \frac{\mu_0 \Gamma(\tfrac{\alpha}{2}+2) \Gamma(\tfrac{d}{2} + \alpha +2) (2 d^2 + 4 \alpha d + 8 d - \alpha)}{4d\Gamma(\tfrac{d+\alpha}{2}+3) \Gamma(\alpha+2)} \, . 
}

The lower bounds can be found analytically for $N = 1$. In this case, $W^{(N)}(\lambda)$ is a $1 \times 1$ matrix with entry
\formula{
 W_{0,0}(\lambda) & = I + \frac{\lambda \sigma_0}{\mu_0 - \lambda} + \frac{\lambda \sigma_1}{\mu_1 - \lambda} \, ,
}
where $I$ is given by~\eqref{eq:int:i}. Therefore, the sequence of the lower bounds $
\mk{\lambdal_{d,n}^{(1)}}
$ consists of the numbers $\mu_n$ for $n \ge 2$ and the two solutions of the equation
\formula[eq:lower:eq]{
 w^{(1)}(\lambda) & = (\mu_0 - \lambda) (\mu_1 - \lambda) I + \lambda \sigma_0 (\mu_1 - \lambda) + \lambda \sigma_1 (\mu_0 - \lambda) = 0 .
}
Note that the above equation with $I$ replaced by $\sigma_0 + \sigma_1$ is a linear equation having solution $\lambda = \mu_0 \mu_1 (\sigma_0 + \sigma_1) / (\mu_0 \sigma_1 + \mu_1 \sigma_0) \in (\mu_0, \mu_1)$. By~\eqref{eq:int:i:est}, $I > \sigma_0 + \sigma_1$, and so one of the solutions of~\eqref{eq:lower:eq} lies between $\mu_0$ and $\mu_1$, while the other one is greater than $\mu_1$.

The two solutions of~\eqref{eq:lower:eq} are easily calculated. It follows that
\formula[eq:lower:an]{
\mk{\lambdal_{d,0}^{(1)}}
 & = \frac{\PP - \sqrt{\PP^2 - 4 \mu_0 \mu_1 I \QQ}}{2 \QQ} \, , & 
\mk{\lambdal_{d,1}^{(1)}}
 & = \min \expr{\frac{\PP + \sqrt{\PP^2 - 4 \mu_0 \mu_1 I \QQ}}{2 \QQ}, \mu_2} ,
}
where $\PP = (\mu_0 + \mu_1) I - \mu_0 \sigma_1 - \mu_1 \sigma_0$, $\QQ = I - \sigma_0 - \sigma_1$.

We remark that if $I > \sigma_0 + \sigma_1$ is taken as a parameter in the equation~\eqref{eq:lower:eq}, the solutions of this equation decrease as $I$ increases. Since $I$ is given as a series (or in an integral form, see~\eqref{eq:int:i} and~\eqref{eq:int:i2}), we may replace it by a more tractable greater number, given for example by~\eqref{eq:zeta} or~\eqref{eq:J}, and thus obtain lower bounds for the eigenvalues that are slightly weaker, but are expressed in closed form. This will help in studying the case $d \le 2$. When $\alpha = 1$, however, the integral in the definition $I$ can be expressed in closed form.

\subsection{Estimates for $d\le 9$ and $\alpha=1$}

In this case
\formula{
 \mu_0 & = \frac{2 \Gamma(\tfrac{3}{2}) \Gamma(\tfrac{d + 1}{2})}{\Gamma(\tfrac{d}{2})} = \frac{\sqrt{\pi} \, \Gamma(\tfrac{d + 1}{2})}{\Gamma(\tfrac{d}{2})} \, , &
 \mu_1 & = \frac{2 \Gamma(\tfrac{5}{2}) \Gamma(\tfrac{d + 3}{2})}{\Gamma(\tfrac{d + 2}{2})} = \frac{\tfrac{3}{2} \sqrt{\pi} \, \Gamma(\tfrac{d + 3}{2})}{\Gamma(\tfrac{d + 2}{2})} \, , \\
 \sigma_0 & = \frac{\pi^{d/2} \Gamma(\tfrac{d}{2}) \Gamma(\tfrac{3}{2})}{\tfrac{d + 1}{2} \Gamma(\tfrac{d}{2}) \Gamma(\tfrac{d + 1}{2})} = \frac{\pi^{(d + 1)/2}}{2 \Gamma(\tfrac{d + 3}{2})} \, , &
 \sigma_1 & = \frac{\pi^{d/2} \Gamma(\tfrac{d}{2}) \Gamma(\tfrac{5}{2})}{\tfrac{d + 5}{2} \Gamma(\tfrac{d + 2}{2}) \Gamma(\tfrac{d + 3}{2})} = \frac{3 \pi^{(d + 1)/2}}{d (d + 5) \Gamma(\tfrac{d + 3}{2})} \, ,
}
and
\formula{
\begin{gathered}
\begin{aligned}
 \pi_{0,0} & = \frac{\pi^{d/2}}{\Gamma(\tfrac{d}{2} + 2)} \, , & \qquad
 \pi_{0,1} & = \pi_{1,0} = \frac{\pi^{d/2} \Gamma(\tfrac{3}{2})}{\Gamma(\tfrac{1}{2}) \Gamma(\tfrac{d}{2} + 3)} = \frac{\pi^{d/2}}{2 \Gamma(\tfrac{d}{2} + 3)} \, ,
\end{aligned} \\
 \pi_{1,1} = \frac{\pi^{d/2} \Gamma(\tfrac{d}{2}) \Gamma(\tfrac{d}{2} + 2) (\Gamma(\tfrac{5}{2}))^2}{(\Gamma(\tfrac{3}{2}) \Gamma(\tfrac{d}{2} + 1))^2 \Gamma(\tfrac{d}{2} + 4)} = \frac{9 \pi^{d/2} (d + 2)}{4 d \, \Gamma(\tfrac{d}{2} + 4)} \, .
\end{gathered}
}
Consequently, the equation~\eqref{eq:upper:eq}, whose solutions are the upper bounds for eigenvalues, takes the form (after multiplication of both sides by $\tfrac{1}{9} \pi^{-d} (d \, \Gamma(\tfrac{d}{2}))^2$)
\formula[eq:upper:eq1]{
 \expr{\frac{\pi}{d + 1} - \frac{4 \lambda}{d (d + 2)}} \expr{\frac{\pi}{d + 5} - \frac{4 \lambda}{(d + 4) (d + 6)}} - \frac{16 \lambda^2}{9 (d + 2)^2 (d + 4)^2} = 0 ,
}
and finally
\formula[eq:upper:an1]{
\mk{\lambdau_{d,0}^{(2)}}
 & = \frac{3 \pi (d + 2) (d + 4) (3 (d^2 + 6 d + 4) - \sqrt{(d^2 + 6 d + 16)^2 - 112})}{32 (d + 1) (d + 3) (d + 5)} \, , \\
\mk{\lambdau_{d,1}^{(2)}}
 & = \frac{3 \pi (d + 2) (d + 4) (3 (d^2 + 6 d + 4) + \sqrt{(d^2 + 6 d + 16)^2 - 112})}{32 (d + 1) (d + 3) (d + 5)} \, .
}
On the other hand, by~\eqref{eq:int:i2},
\formula{
 I & = \frac{(d + 3)^2 \pi^{d/2}}{d^2 \Gamma(\tfrac{d}{2})} \int_0^1 \frac{\sqrt{t} (1 - t)^{d/2 + 1}}{1 - \sqrt{t}} \, dt \\
 & = \frac{(d + 3)^2 \pi^{d/2}}{d^2 \Gamma(\tfrac{d}{2})} \int_0^1 (\sqrt{t} + t) (1 - t)^{d/2} dt \\
 & = \frac{(d + 3)^2 \pi^{d/2}}{d^2} \expr{\frac{4}{(d + 2) (d + 4) \Gamma(\tfrac{d}{2})} + \frac{\sqrt{\pi} \, d}{(d + 1) (d + 3) \Gamma(\tfrac{d + 1}{2})}} \, .
}
Therefore, the equation~\eqref{eq:lower:eq} for the lower bounds for eigenvalues simplifies to (after multiplication of both sides by $2 \pi^{-(d + 1)/2} \Gamma(\tfrac{d + 3}{2})$)
\formula[eq:lower:eq1]{
 & \frac{d + 3}{d} \expr{1 + \frac{4 (d + 1) (d + 3)}{d (d + 2) (d + 4) \BB}} \expr{\frac{\pi}{\BB} - \lambda} \expr{\frac{3 \pi (d + 1)}{2 d \BB} - \lambda} \\
  & \hspace*{10em} + \lambda \expr{\frac{3 \pi (d + 1)}{2 d \BB} - \lambda} + \frac{6 \lambda}{d (d + 5)} \expr{\frac{\pi}{\BB}  - \lambda} = 0 ,
}
where $\BB = B(\tfrac{d}{2}, \tfrac{1}{2}) = \sqrt{\pi} \, \Gamma(\tfrac{d}{2}) / \Gamma(\tfrac{d + 1}{2})$.

We claim that $
\mk{\lambdau_{d,0}^{(2)}}
 < \tfrac{3 \pi}{16} (d + 3)$ and $
\mk{\lambdal_{d,1}^{(1)}}
 > \min(\tfrac{3 \pi}{16} (d + 5), \mu_2)$ when $d \le 9$ (in fact, this holds for all $d$).

Observe that the coefficient of $\lambda^2$ in the left-hand side of~\eqref{eq:upper:eq1} is positive (this follows easily from the inequality $(d + 2) (d + 4) > d (d + 6)$), and substituting $\lambda = \tfrac{3 \pi}{16} (d + 3)$ in the left-hand side of~\eqref{eq:upper:eq1} gives a negative number:
\formula{
 & \expr{\frac{\pi}{d + 1} - \frac{3 \pi (d + 3)}{4 d (d + 2)}} \expr{\frac{\pi}{d + 5} - \frac{3 \pi (d + 3)}{4 (d + 4) (d + 6)}} - \frac{\pi^2 (d + 3)^2}{16 (d + 2)^2 (d + 4)^2} \\
 & \hspace*{13em} = -\pi^2 \frac{8 d^4 + 96 d^3 + 409 d^2 + 726 d + 459}{2 d (d + 1) (d + 2)^2 (d + 4)^2 (d + 5) (d + 6)} \, .
}
This proves that $
\mk{\lambdau_{d,0}^{(2)}}
 < \tfrac{3 \pi}{16} (d + 3)$. 

In a similar manner, the coefficient of $\lambda^2$ in the left-hand side of~\eqref{eq:lower:eq1} is positive (because $\BB > 0$ and $\tfrac{d + 3}{d} > 1 + \tfrac{6}{d (d + 5)}$), and substituting $\lambda = \tfrac{3 \pi}{16} (d + 5)$ in the left-hand side of~\eqref{eq:lower:eq1} gives a negative number. Indeed, 
\formula{
 & \frac{d + 3}{d} \expr{1 + \frac{4 (d + 1) (d + 3)}{d (d + 2) (d + 4) \BB}} \expr{\frac{\pi}{\BB} - \frac{3 \pi (d + 5)}{16}} \expr{\frac{3 \pi (d + 1)}{2 d \BB} - \frac{3 \pi (d + 5)}{16}} \\
 & \hspace*{3em} + \frac{3 \pi (d + 5)}{16} \expr{\MK{\frac{3 \pi (d + 1)}{2 d \BB}} - \frac{3 \pi (d + 5)}{16}} + \frac{9 \pi}{8 d} \expr{\MK{\frac{\pi}{\BB}} - \frac{3 \pi (d + 5)}{16}} \\
 & = -\frac{\pi^2 (d + 3)}{256 d^3 (d + 2) (d + 4) \BB^3} \, \bigl(\MK{12} 
d^5 \BB^2 + (96 d^4 \BB - 27 d^5 \BB^3) + (-1536 d^3 + \MK{240} 
d^4 \BB^2) \\
 & \hspace*{3em} + (1920 d^3 \BB - 297 d^4 \BB^3) + (-7680 d^2 + \MK{1032} 
d^3 \BB^2) + (8256 d^2 \BB - 1026 d^3 \BB^3) \\
 & \hspace*{3em} + (-10752 d + \MK{1128} 
d^2 \BB^2) + (10752 d \BB - 1080 d^2 \BB^3) + (-4608 + \MK{180} 
 d \BB^2) + 4320 \BB \bigr) , 
}
\MK{and it is elementary, but rather tedious, to verify that the right-hand side is negative for $d \le 9$ (with some additional work it is not very difficult to extend this statement to general $d \ge 1$).} Therefore, $
\mk{\lambdal_{d,1}^{(1)}}
 > \min(\tfrac{3 \pi}{16} (d + 5), \mu_2)$, and our claim is proved.

Observe that until now we did not use the restriction $d \le 9$ in an essential way. This condition is needed only for the final step: we have
\formula{
 \mu_2 & = \frac{\Gamma(\tfrac{7}{2}) \Gamma(\tfrac{d + 5}{2})}{\Gamma(\tfrac{d + 4}{2})} = \frac{15 \sqrt{\pi} \, \Gamma(\tfrac{d + 5}{2})}{8 \, \Gamma(\tfrac{d + 4}{2})} \, ,
}
and by inspection, $\mu_2 > \tfrac{3 \pi}{16} (d + 5)$ for $d \le 9$ (and this inequality is \emph{not} true for $d \ge 10$). It follows that $
\mk{\lambdal_{d,1}^{(1)}}
 > \tfrac{3 \pi}{16} (d + 5)$ for $d \le 9$ and $
\mk{\lambdau_{d,0}^{(2)}}
 < \tfrac{3 \pi}{16} (d + 3)$ for all $d$. In particular, $
\mk{\lambdau_{d+2,0}^{(2)}}
 < 
\mk{\lambdal_{d,1}^{(1)}}
$ when $d \le 9$, as desired. This proves half of Theorem~\ref{th:antisymmetric}.

We remark that using a similar approach, with more careful estimates, one can extend the above result to $\alpha = 1$ and $d = 10$, see Figure~\ref{fig:gap}.

\subsection{Estimate for $d \le 2$ and $\alpha \in (0,2]$}
\label{subsec:d2}

We start with two technical lemmas.

\begin{lemma}\label{lem:4gamma}
The function
\formula{
h(\alpha)=\frac{ \Gamma(\alpha+3)\Gamma(\frac{\alpha}{2} + \frac{9}{2}) }{\Gamma(\frac{\alpha}{2}+2)\Gamma(\alpha+\frac{9}{2})}
}
is increasing on $[0,2]$.
\end{lemma}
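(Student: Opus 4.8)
The plan is to pass to the logarithmic derivative. Since $h>0$ on $[0,2]$, it suffices to prove $H'(\alpha)\ge 0$ there, where $H=\log h$, i.e.
\[
 H'(\alpha)=\psi(\alpha+3)+\tfrac12\psi(\tfrac\alpha2+\tfrac92)-\tfrac12\psi(\tfrac\alpha2+2)-\psi(\alpha+\tfrac92),\qquad \psi=\Gamma'/\Gamma .
\]
Expanding with $\psi(x)=-\gamma+\sum_{k\ge0}\bigl(\tfrac1{k+1}-\tfrac1{k+x}\bigr)$, the $-\gamma$ and the $\tfrac1{k+1}$ parts cancel (their coefficients sum to $1+\tfrac12-\tfrac12-1=0$), and combining the rest over common denominators yields the convergent series
\[
 H'(\alpha)=\sum_{k=0}^{\infty}c_k(\alpha),\qquad
 c_k(\alpha)=\frac{5/4}{(k+\tfrac\alpha2+2)(k+\tfrac\alpha2+\tfrac92)}-\frac{3/2}{(k+\alpha+3)(k+\alpha+\tfrac92)} .
\]
(Equivalently $H'(\alpha)=\tfrac12\bigl(\psi(\tfrac\alpha2+\tfrac92)-\psi(\tfrac\alpha2+2)\bigr)-\bigl(\psi(\alpha+\tfrac92)-\psi(\alpha+3)\bigr)$; applying $\psi(x+s)-\psi(x)=\int_0^1\tfrac{t^{x-1}(1-t^s)}{1-t}\,dt$ and substituting $t\mapsto t^2$ gives the integral form $H'(\alpha)=\int_0^1\tfrac{t^{\alpha+3}(1-t^5)-2t^{2\alpha+5}(1-t^3)}{1-t^2}\,dt$.)

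Next I would locate the worst case. At $\alpha=0$ the integrand above equals $\tfrac{t^3(1-2t^2+t^5)}{1-t^2}$, and a short polynomial division reduces the integral to elementary terms, giving the closed form $H'(0)=\ln 2-\tfrac{71}{105}$, which is positive because $\ln 2=\sum_{n\ge1}\tfrac1{n2^n}>\tfrac12+\tfrac18+\tfrac1{24}+\tfrac1{64}=\tfrac{131}{192}>\tfrac{71}{105}$. Numerically $H'$ is increasing on $[0,2]$ (roughly from $0.017$ at $\alpha=0$ to $0.057$ at $\alpha=2$), so $\alpha=0$ is essentially the tightest point.

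It remains to upgrade $H'(0)>0$ to $H'(\alpha)\ge 0$ for all $\alpha\in[0,2]$, and this is where the real work — and the main obstacle — lies. The series is not termwise positive: $c_k(\alpha)>0$ for small $k$ but $c_k(\alpha)<0$ for large $k$, since the leading coefficient of the numerator of $c_k(\alpha)$ is $\tfrac54-\tfrac32=-\tfrac14$; thus $H'(\alpha)$ is a genuine cancellation whose value is only of order $10^{-2}$, so crude bounds for $\psi$ are far too lossy. I would finish by truncating: fix an index $K$ and write $H'(\alpha)=\sum_{k<K}c_k(\alpha)+\sum_{k\ge K}c_k(\alpha)$, bounding the tail below by
\[
 \sum_{k\ge K}c_k(\alpha)\ \ge\ \sum_{k\ge K}\frac{5/4}{(k+\tfrac\alpha2+\tfrac{13}{4})^2}-\sum_{k\ge K}\frac{3/2}{(k+\alpha+3)(k+\alpha+4)}\ \ge\ \frac{5/4}{K+\tfrac\alpha2+\tfrac{13}{4}}-\frac{3/2}{K+\alpha+3},
\]
using $(k+\tfrac\alpha2+2)(k+\tfrac\alpha2+\tfrac92)\le(k+\tfrac\alpha2+\tfrac{13}{4})^2$, the estimate $k+\alpha+\tfrac92\ge k+\alpha+4$, the comparison $\sum_{k\ge K}(k+c)^{-2}\ge(K+c)^{-1}$, and the telescoping identity $\sum_{k\ge K}\bigl(\tfrac1{k+\alpha+3}-\tfrac1{k+\alpha+4}\bigr)=\tfrac1{K+\alpha+3}$. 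This reduces everything to the inequality $\sum_{k<K}c_k(\alpha)+\tfrac{5/4}{K+\alpha/2+13/4}-\tfrac{3/2}{K+\alpha+3}\ge 0$ on $[0,2]$, a sum of finitely many explicit rational functions of $\alpha$; clearing denominators turns it into a polynomial inequality on $[0,2]$, which one checks by sign evaluation at the endpoints together with the (low-complexity) behaviour of the polynomial on the interval. The obstacle is purely quantitative: because the margin is thin, the one-term bound $K=1$ fails badly and one needs $K$ fairly large ($K\approx 10$ already works, since there $\sum_{k<10}c_k(0)\approx 0.032$ against a tail bound $\approx-0.021$), after which the polynomial check, though mechanical, is genuinely tedious. (An alternative is to prove the stronger statement that $H'$ is nondecreasing on $[0,2]$, so that $H'\ge H'(0)>0$; but $H''(\alpha)=\psi'(\alpha+3)+\tfrac14\psi'(\tfrac\alpha2+\tfrac92)-\tfrac14\psi'(\tfrac\alpha2+2)-\psi'(\alpha+\tfrac92)$ is again a sign-changing series with an equally thin margin — $H''(2)\approx 6\cdot10^{-3}$ — so this route brings no real simplification.)
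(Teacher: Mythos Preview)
Your setup is correct: the logarithmic derivative, the series representation
\[
H'(\alpha)=\sum_{k\ge0}\Bigl(\tfrac{5/4}{(k+\tfrac\alpha2+2)(k+\tfrac\alpha2+\tfrac92)}-\tfrac{3/2}{(k+\alpha+3)(k+\alpha+\tfrac92)}\Bigr),
\]
the evaluation $H'(0)=\ln 2-\tfrac{71}{105}>0$, and the tail estimate are all fine. But the proof is not finished: you reduce everything to a polynomial inequality on $[0,2]$ that you do not verify, and you yourself flag this as ``genuinely tedious'' with a thin margin. As it stands this is a plan, not a proof; the missing polynomial check with $K\approx 10$ is exactly the hard part, and nothing you wrote guarantees it goes through uniformly in $\alpha$.

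The paper avoids this brute-force endgame by a single structural observation you are missing: apply the Legendre duplication formula to $\Gamma(\alpha+3)$ and $\Gamma(\alpha+\tfrac92)$. Writing $\alpha=2z$ one gets
\[
h(2z)=2^{-3/2}\,\frac{\Gamma(z+\tfrac92)\Gamma(z+\tfrac32)}{\Gamma(z+\tfrac94)\Gamma(z+\tfrac{11}{4})},
\]
so now all four $\psi$-arguments move at the \emph{same} speed. After one use of $\psi(w+1)=\psi(w)+\tfrac1w$, the logarithmic derivative splits as
\[
\Bigl(\tfrac{1}{z+\tfrac72}+\tfrac{1/4}{z+\tfrac94}+\tfrac{3/4}{z+\tfrac{11}{4}}-\tfrac{1}{z+\tfrac32}\Bigr)
+f_{3/4}(z+\tfrac{11}{4})+f_{1/4}(z+\tfrac{9}{4}),
\]
where $f_a(w)=\psi(w+a)-\psi(w)-\tfrac{a}{w}$. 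For $a\in(0,1)$ the function $f_a$ is completely monotone (Mortici), hence positive; and the rational part has numerator $32z^3+172z^2+228z+3>0$ for $z>0$. The duplication step is the key idea: it replaces your sign-changing series with a clean decomposition into two manifestly positive pieces, so no truncation or polynomial check is needed.
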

\begin{proof}
Using Legendre duplication formula for the gamma function, we check that 
\formula{
 h(2z) = 2^{-3/2} \, \frac{\Gamma(z+\tfrac{9}{2})\Gamma(z+\tfrac{3}{2})}{\Gamma(z+\tfrac{9}{4})\Gamma(z+\tfrac{11}{4})}.
}
The logarithmic derivative of $H(z) = h(2 z)$ is given in terms of the digamma function $\psi$,
\begin{equation}\label{lemma5_proof1}
\frac{H'(z)}{H(z)}=\psi(z+\tfrac{9}{2})+\psi(z+\tfrac{3}{2})-\psi(z+\tfrac{9}{4})-\psi(z+\tfrac{11}{4}). 
\end{equation}
The proof will be complete if we show that this quantity is positive for all $z \in (0, 1)$.

Let us define $f_a(z)=\psi(z+a)-\psi(z)-a/z$.  Using the functional equation $\psi(z+1)=\psi(z)+\tfrac{1}{z}$
we rewrite  \eqref{lemma5_proof1} in the following equivalent form
\formula[lemma5_proof2]{
\frac{H'(z)}{H(z)} & = \expr{\frac{1}{z+\tfrac{7}{2}}+\frac{\tfrac{1}{4}}{z+\tfrac{9}{4}}+\frac{\tfrac{3}{4}}{z+\tfrac{11}{4}}-\frac{1}{z+\tfrac{3}{2}}}
+f_{3/4}(z+\tfrac{11}{4})+f_{1/4}(z+\tfrac{9}{4}). 
}
According to Theorem~2.1 in~\cite{bib:m10},
the function 
$z\mapsto f_a(z)$ is completely monotone for $a\in (0,1)$. In particular, $f_a(z)>0$ for all $z \in (0,\infty)$ and $a \in (0,1)$. 
Using this result, formula~\eqref{lemma5_proof2} and the 
identity
\formula{
\frac{1}{z+\tfrac{7}{2}}+\frac{\tfrac{1}{4}}{z+\tfrac{9}{4}}+\frac{\tfrac{3}{4}}{z+\tfrac{11}{4}}-\frac{1}{z+\tfrac{3}{2}} & =
\frac{(32z^3 + 172z^2 + 228z + 3)}{32(z+\tfrac{7}{2})(z+\tfrac{9}{4})(z+\tfrac{11}{4})(z+\tfrac{3}{2})} \, , 
}
we prove that $H'(z) / H(z) > 0$ 
for all $z>0$, as desired.
\end{proof}

\begin{lemma}\label{lem:4gamma2}
The function
\formula{
 h(\alpha) & = \frac{\Gamma(\frac{\alpha}{2}+2)\Gamma(\alpha+\frac{7}{2})}{\Gamma(\frac{\alpha}{2}+\frac{9}{2}) \Gamma(\alpha+1)}
}
is increasing on $[0, 2]$.
\end{lemma}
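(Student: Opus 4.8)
The plan is to follow the same strategy as in the proof of Lemma~\ref{lem:4gamma}. First I would substitute $\alpha = 2z$, $z \in [0, 1]$, and apply the Legendre duplication formula to the two gamma functions whose argument involves $\alpha$, namely $\Gamma(\alpha + \tfrac{7}{2}) = \Gamma(2z + \tfrac{7}{2})$ and $\Gamma(\alpha + 1) = \Gamma(2z + 1)$, while $\tfrac{\alpha}{2} + 2 = z + 2$ and $\tfrac{\alpha}{2} + \tfrac{9}{2} = z + \tfrac{9}{2}$ stay unchanged. This gives
\[
 H(z) \;:=\; h(2z) \;=\; 2^{5/2}\,\frac{\Gamma(z + 2)\,\Gamma(z + \tfrac{7}{4})\,\Gamma(z + \tfrac{9}{4})}{\Gamma(z + \tfrac{9}{2})\,\Gamma(z + \tfrac{1}{2})\,\Gamma(z + 1)},
\]
so it suffices to show that $H$ is increasing on $(0, 1)$; in fact the argument below works on all of $(0, \infty)$, and continuity at $0$ then yields monotonicity on $[0, 2]$.

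Next I would pass to the logarithmic derivative,
\[
 \frac{H'(z)}{H(z)} = \psi(z + 2) + \psi(z + \tfrac{7}{4}) + \psi(z + \tfrac{9}{4}) - \psi(z + \tfrac{9}{2}) - \psi(z + \tfrac{1}{2}) - \psi(z + 1),
\]
and reduce the lemma to proving this is positive for $z > 0$. Exactly as in Lemma~\ref{lem:4gamma}, I would use $\psi(w + 1) = \psi(w) + 1/w$ to rewrite the three differences $\psi(z+2) - \psi(z+1)$, $\psi(z+\tfrac94) - \psi(z+\tfrac12)$ and $\psi(z+\tfrac74) - \psi(z+\tfrac92)$ so that, after the telescoped rational terms are collected, each remaining piece is of the form $f_a(\cdot)$ with $a \in (0,1)$, where $f_a(w) = \psi(w+a) - \psi(w) - a/w$. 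Concretely I expect an identity
\[
 \frac{H'(z)}{H(z)} = f_{3/4}(z + \tfrac{1}{2}) + f_{1/4}(z + \tfrac{3}{2}) + R(z),
\]
with $R(z) = \Big(\tfrac{1}{z+1} - \tfrac{1}{z+7/2}\Big) + \Big(\tfrac{1}{z+5/4} - \tfrac{1}{z+5/2}\Big) + \tfrac{3}{4}\Big(\tfrac{1}{z+1/2} - \tfrac{1}{z+3/2}\Big)$. By Theorem~2.1 in~\cite{bib:m10} the functions $f_{3/4}$ and $f_{1/4}$ are completely monotone, hence positive, on $(0,\infty)$; and $R(z) > 0$ for $z > 0$ since in each grouped difference the smaller denominator comes first. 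This gives $H'(z)/H(z) > 0$ for $z > 0$, which completes the argument.

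The only genuine work is the bookkeeping in the regrouping step: one must decide which digamma terms to pair and how many times to apply $\psi(w+1) = \psi(w) + 1/w$ so that every resulting $f_a$ has shift $a \in (0,1)$ and the collected rational remainder is manifestly nonnegative — routine but error-prone. As a shortcut that sidesteps it entirely, I could instead invoke Karamata's inequality: the multiset $\{z+\tfrac12,\,z+1,\,z+\tfrac92\}$ majorizes $\{z+\tfrac74,\,z+2,\,z+\tfrac94\}$ (the two have equal sums, and the sorted partial sums of the former dominate those of the latter), so strict concavity of $\psi$ on $(0,\infty)$ gives $\psi(z+\tfrac74) + \psi(z+2) + \psi(z+\tfrac94) > \psi(z+\tfrac12) + \psi(z+1) + \psi(z+\tfrac92)$ directly, i.e. $H'(z)/H(z) > 0$.
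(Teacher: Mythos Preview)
Your proposal is correct --- both the $f_a$ regrouping and the Karamata shortcut go through (I checked the identity for $R(z)$ and the majorization of $\{z+\tfrac74,\,z+2,\,z+\tfrac94\}$ by $\{z+\tfrac12,\,z+1,\,z+\tfrac92\}$; both are fine, and $\psi'' < 0$ on $(0,\infty)$ gives the strict inequality). However, the paper takes a noticeably simpler route: it does \emph{not} substitute $\alpha = 2z$ or invoke the duplication formula at all. Instead it writes the logarithmic derivative directly via the series for $\psi$,
\[
 \frac{h'(\alpha)}{h(\alpha)} = -\sum_{k=0}^\infty\!\Bigl(\tfrac{1/2}{\alpha/2+2+k} + \tfrac{1}{\alpha+7/2+k} - \tfrac{1/2}{\alpha/2+9/2+k} - \tfrac{1}{\alpha+1+k}\Bigr)
 = -\sum_{k=0}^\infty\!\Bigl(\tfrac{5}{(2k+\alpha+4)(2k+\alpha+9)} - \tfrac{5}{(k+\alpha+1)(2k+2\alpha+7)}\Bigr),
\]
and observes that each summand is negative (an elementary quadratic check in $k$ and $\alpha$). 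So the paper's argument is entirely self-contained --- no Mortici, no Karamata --- and is actually \emph{easier} than the proof of Lemma~\ref{lem:4gamma}, which is why the paper remarks ``This result is much simpler than the previous one.'' Your first approach transplants the full machinery of Lemma~\ref{lem:4gamma} where it is not needed; your Karamata shortcut, on the other hand, is arguably the cleanest of all three arguments, trading a short computation for a conceptual one-liner.
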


\begin{proof}
This result is much simpler than the previous one: by Theorem~1.2.5 in~\cite{bib:aar99}, the logarithmic derivative of $h$ equals
\formula{
 \frac{h'(\alpha)}{h(\alpha)} & = -\sum_{k=0}^\infty \left( 
\frac{\frac12}{\frac{\alpha}{2}+2 + k} + \frac{1}{\alpha+\frac{7}{2}+k} - \frac{\frac12}{\frac{\alpha}{2}+\frac{9}{2} +k} - \frac{1}{\alpha+1+k} \right) \\
&= -\sum_{k=0}^\infty \left( \frac{5}{(2k+\alpha+4)(2k+\alpha+9)} - \frac{5}{(k+\alpha+1)(2k+2\alpha+7)} \right) > 0 ,
}
and hence $h$ increasing.
\end{proof}

Denote the upper bound~\eqref{eq:upper:W} for $
\mk{\lambdau_{d,0}^{(2)}}
$ by $\RR(d)$. We begin \mk{by} checking that $\mu_2 > \RR(d+2)$ for $d=1$ and $d=2$. For $d=1$ this inequality is equivalent to
\formula{
 2(11\alpha+42)\Gamma(\tfrac{\alpha}{2}+2)\Gamma(\alpha+\tfrac{7}{2}) <
 \Gamma(\alpha+5)\Gamma(\tfrac{\alpha}{2} + \tfrac{9}{2}),
}
and since $11\alpha+42 < 12\alpha+42$, it suffices to show
\formula[eq:tmp1]{
24 & < (\alpha+3)(\alpha+4) \, \frac{ \Gamma(\alpha+3)\Gamma(\tfrac{\alpha}{2} + \tfrac{9}{2}) }{\Gamma(\tfrac{\alpha}{2}+2)\Gamma(\alpha+\tfrac{9}{2})}.
}
Both sides of the above inequality are equal for $\alpha=0$, and, by Lemma~\ref{lem:4gamma}, the right hand side is increasing in $\alpha$. Inequality~\eqref{eq:tmp1} follows.

For $d=2$ the inequality $\mu_2 > \RR(d+2)$ takes the form
\formula{
\frac{2^\alpha \Gamma(\tfrac{\alpha}{2}+1) (\Gamma(\tfrac{\alpha}{2}+1))^2 \Gamma(\alpha+4)(15\alpha+64)}{16 \, \Gamma(\tfrac{\alpha}{2}+5) \Gamma(\alpha+2)} & < 2^{\alpha-2} (\Gamma(\tfrac{\alpha}{2}+3))^2,
}
which, after simplification, is equivalent to the elementary inequality
\formula{
 (\alpha+3)(15\alpha+64) < 2(\tfrac{\alpha}{2}+2)^3(\tfrac{\alpha}{2}+3)(\tfrac{\alpha}{2}+4) .
}

We recall that if we replace in equation~\eqref{eq:lower:eq} the number $I$ by a larger number $J$, which we choose to be the right-hand side of~\eqref{eq:J}, then the larger root of this equation is less than $
\mk{\lambdal_{d,1}^{(1)}}
$. Hence, in order to prove that $
\mk{\lambdau_{d+2,0}^{(2)}}
 < 
\mk{\lambdal_{d,1}^{(1)}}
$, it suffices to show that $F(\RR(d+2)) < 0$, where
\formula{
F(\lambda) &=  (\mu_0 - \lambda) (\mu_1 - \lambda) J + \lambda \sigma_0 (\mu_1 - \lambda) + \lambda \sigma_1 (\mu_0 - \lambda) \\
&= (J-\sigma_0-\sigma_1)\lambda^2 - ((\mu_0+\mu_1)J - \sigma_0 \mu_1 - \sigma_1 \mu_0) \lambda + \mu_0 \mu_1 J.
}
Direct calculation gives
\formula[eq:d2:q]{
 F(\RR(d+2)) & = \frac{(d+\alpha)(d+\alpha+2)\mu_0^2 \sigma_0}{d^2\alpha} \, g(T),
}
where $g(t) = at^2+bt+\alpha+2$, and
\formula{
 a & = \frac{(8+2d-\alpha d)(d+\alpha)}{16 d (d+2)^2 (d+\alpha+4)} \, , \\
 b & = -\frac{16\alpha +12\alpha^2+2\alpha^3+32d +16\alpha d -\alpha^3 d +8d^2 -\alpha^2 d^2}{8d(d+2)(d+\alpha+4)} \, , \\
 T & = \frac{(2(d+2)^2+(4\alpha+8)(d+2)-\alpha)  \Gamma(\tfrac{\alpha}{2}+2)\Gamma(\tfrac{d}{2} + \alpha+3)}
{\Gamma(\tfrac{d+\alpha}{2}+4)\Gamma(\alpha+2)} \, .
}
The proof of the other half of Theorem~\ref{th:antisymmetric} will be complete once we show that $g(T) < 0$ for $d = 1$ and $d = 2$.

We consider $d=1$ first. In this case
\formula{
 a & = \frac{(10-\alpha)(\alpha+1)}{144(\alpha+5)} \, ,\\
 b & = -\frac{\alpha^3+11\alpha^2+32\alpha+40}{24(\alpha+5)} \, ,\\
 T & = \frac{(42+11\alpha)\Gamma(\frac{\alpha}{2}+2)\Gamma(\alpha+\frac{7}{2})}{\Gamma(\frac{\alpha}{2}+\frac{9}{2}) \Gamma(\alpha+2)} \, .
}
We will show that
\formula[eq:d1]{
 \frac{2 \alpha + 12}{\alpha+1} < T < 6\alpha + 12 .
}
Knowing this, in order to prove that $g(T) < 0$ it suffices to show that $g(\frac{2 \alpha + 12}{\alpha+1})<0$ and $g(6\alpha + 12)<0$. This can be done by a~direct calculation:
\formula{
 g(\tfrac{2 \alpha+12}{\alpha+1}) &= -\frac{\alpha^2(3\alpha^2+16\alpha+8)}{36(\alpha+1)(\alpha+5)} < 0,\\
 g(6\alpha + 12) &= - \frac{\alpha^2 (\alpha+2)^2}{2(\alpha+5)} < 0.
}
We come back to~\eqref{eq:d1}. We obtain
\formula{
T > \frac{42+7\alpha}{\alpha+1}\,\, \frac{\Gamma(\frac{\alpha}{2}+2)\Gamma(\alpha+\frac{7}{2})}{\Gamma(\frac{\alpha}{2}+\frac{9}{2}) \Gamma(\alpha+1)} \, .
}
By Lemma~\ref{lem:4gamma2}, the ratio of gamma functions in the right-hand side is increasing, and its value for $\alpha = 0$ is equal to $\tfrac{2}{7}$. The lower bound in~\eqref{eq:d1} follows. To prove the upper bound, we write
\formula{
T <  \frac{(12\alpha + 42)\Gamma(\frac{\alpha}{2}+2)\Gamma(\alpha+\frac{7}{2})}{\Gamma(\frac{\alpha}{2}+\frac{9}{2}) \Gamma(\alpha+2)}
= (6\alpha + 12) \, \frac{\Gamma(\frac{\alpha}{2}+1)\Gamma(\alpha+\frac{9}{2})}{\Gamma(\frac{\alpha}{2}+\frac{9}{2}) \Gamma(\alpha+2)}.
}
By Lemma~\ref{lem:4gamma}, the ratio of four gamma functions is decreasing, and its value at~$0$ equals~$1$. This completes the proof of~\eqref{eq:d1}.

The case $d=2$ requires less effort. We have
\formula{
a&=\frac{(6-\alpha)(\alpha+2)}{256(\alpha+6)} \, ,\\
b&=-\frac{\alpha^2+6\alpha+12}{8(\alpha+6)} \, ,\\
T&=\frac{8(15\alpha+64)(\alpha+2)(\alpha+3)}{(\alpha+4)(\alpha+6)(\alpha+8)} \, ,
}
and it is easy to check that
\formula{
16 < T < \frac{128(\alpha+2)(\alpha+3)}{(\alpha+6)(\alpha+8)} < \frac{32(\alpha+2)(\alpha+6)}{3(\alpha+8)} \, .
}
Furthermore, by a direct calculation,
\formula{
 g(16) &= \frac{-2\alpha^2}{\alpha+6} < 0, \\
 g\!\expr{\frac{32(\alpha+2)(\alpha+6)}{3(\alpha+8)}} &= - \frac{\alpha^2(\alpha+2)(4\alpha^2+28\alpha+31)}{9(\alpha+8)^2} < 0,
}
and, consequently, $g(T)<0$. This completes the proof of Theorem~\ref{th:antisymmetric}.

We note that apparently the expression in~\eqref{eq:d2:q} is negative also for $d=3$ (which would extend Theorem~\ref{th:antisymmetric} to this case), but we are unable to prove it rigorously. \MK{For $d \ge 4$, more refined estimates are needed.}

\bigskip

\noindent
\mk{\textbf{Acknowledgements.} The authors thank the anonymous referees for pointing out several mistakes in the initial version of this article and for providing numerous helpful comments.}

%
%

%
%

\clearpage

\section*{Tables and figures}
\addcontentsline{toc}{toc}{Tables and figures}

\begin{figure}[h!]
\centering
\begin{tabular}{cc}
\includegraphics[width=0.45\textwidth]{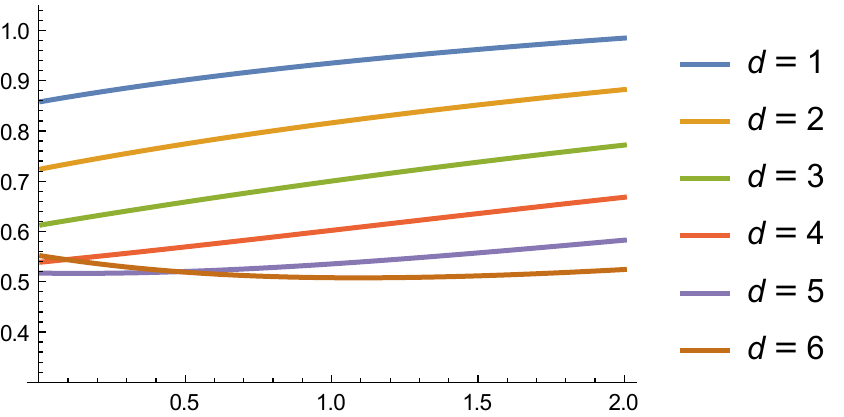} &
\includegraphics[width=0.45\textwidth]{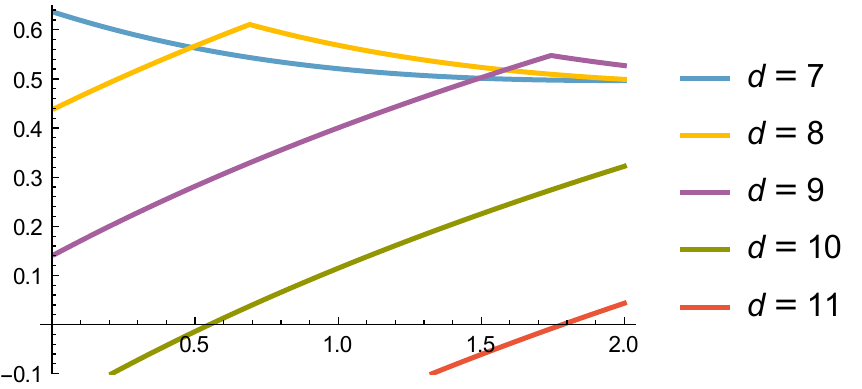}
\\
{\footnotesize $d = 1, 2, \ldots, 6$} &
{\footnotesize $d = 7, 8, \ldots, 11$}
\end{tabular}
\caption{Plot of the gap between the upper bound for $(\lambda_{d+2,0})^{1/\alpha}$ and the lower bound for $(\lambda_{d,1})^{1/\alpha}$, namely of $(
\mk{\lambdal_{d,1}^{(1)}}
)^{1/\alpha} - (
\mk{\lambdau_{d+2,0}^{(2)}}
)^{1/\alpha}$, for $\alpha \in (0, 2]$ \mk{and $d = 1, 2, \ldots, 11$}. 
Note different behaviour for $d \le 7$ \mk{(in which case the lower bound $
\mk{\lambdal_{d,1}^{(1)}}
$ is equal to the larger root of the Weinstein--Aronszajn determinant $w^{(1)}(\lambda)$)} and $d \ge 10$ \mk{(when $
\mk{\lambdal_{d,1}^{(1)}}
 = \mu_2$)}, with transition clearly visible for $d = 8$ and $d = 9$.}
\label{fig:gap}
\end{figure}

\begin{figure}[ht]
\centering
\begin{tabular}{ccc}
\includegraphics[width=0.3\textwidth]{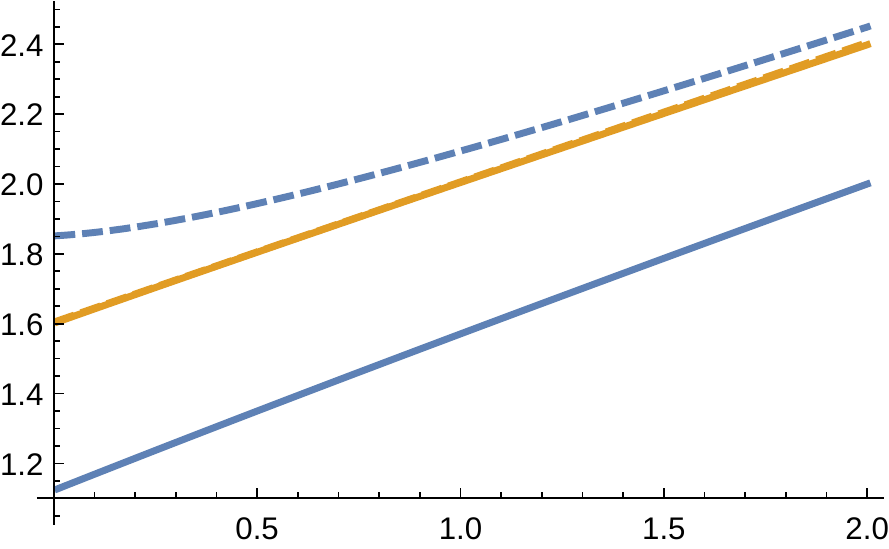} &
\includegraphics[width=0.3\textwidth]{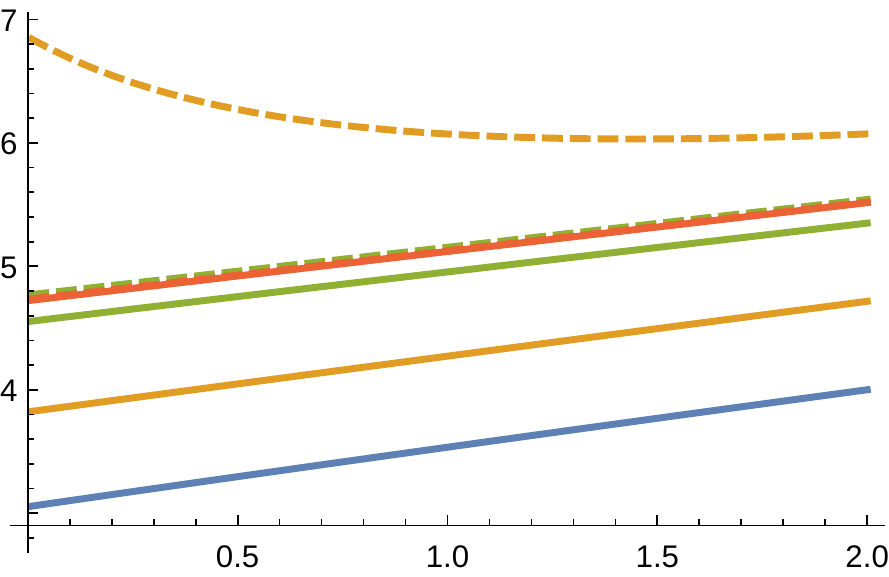} &
\includegraphics[width=0.3\textwidth]{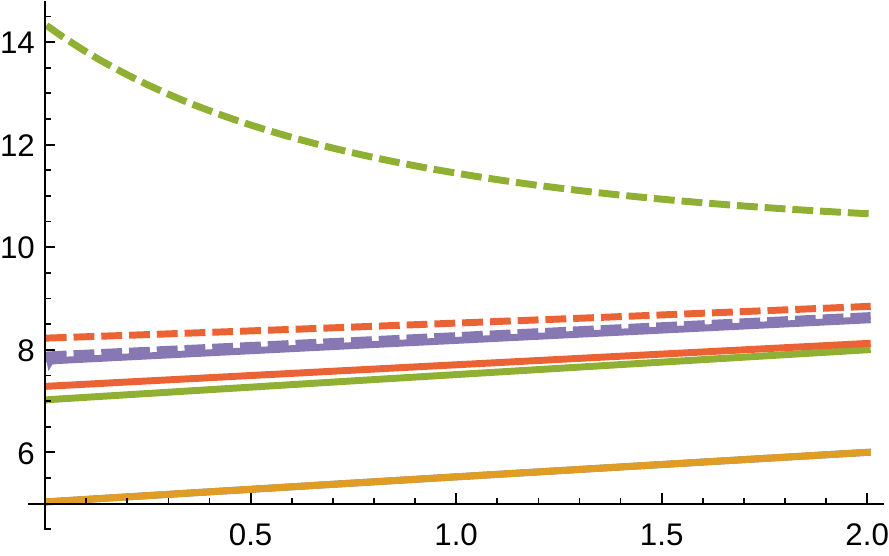}
\\
{\footnotesize $d = 2$, $n = 0$} &
{\footnotesize $d = 2$, $n = 1$} &
{\footnotesize $d = 2$, $n = 2$} \\
\includegraphics[width=0.3\textwidth]{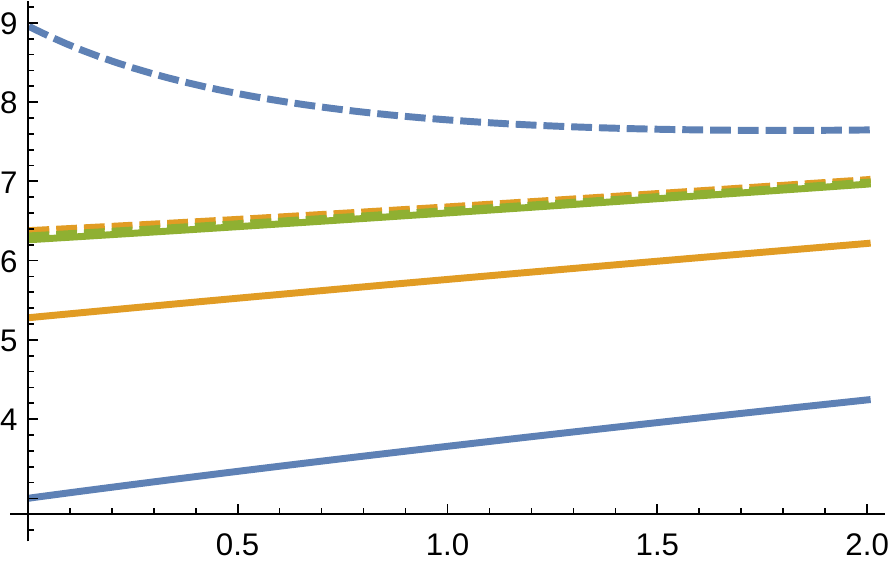} &
\includegraphics[width=0.3\textwidth]{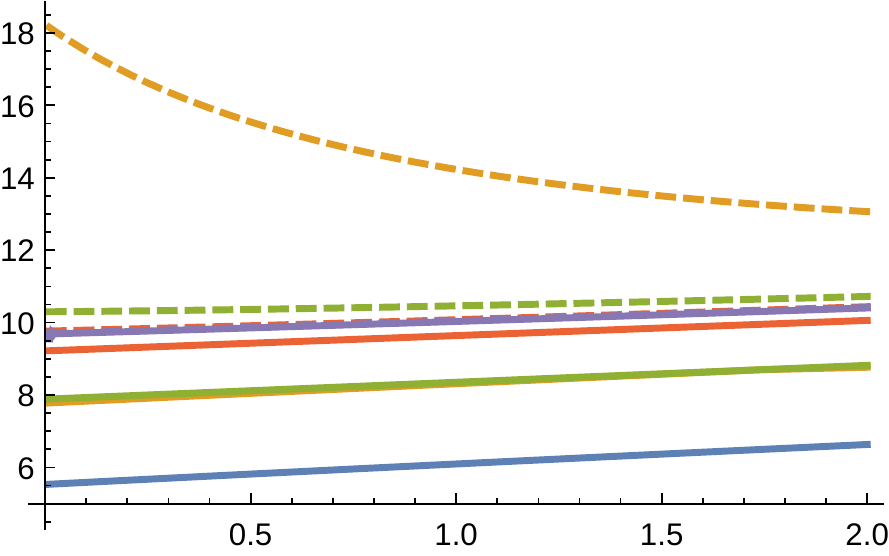} &
\includegraphics[width=0.3\textwidth]{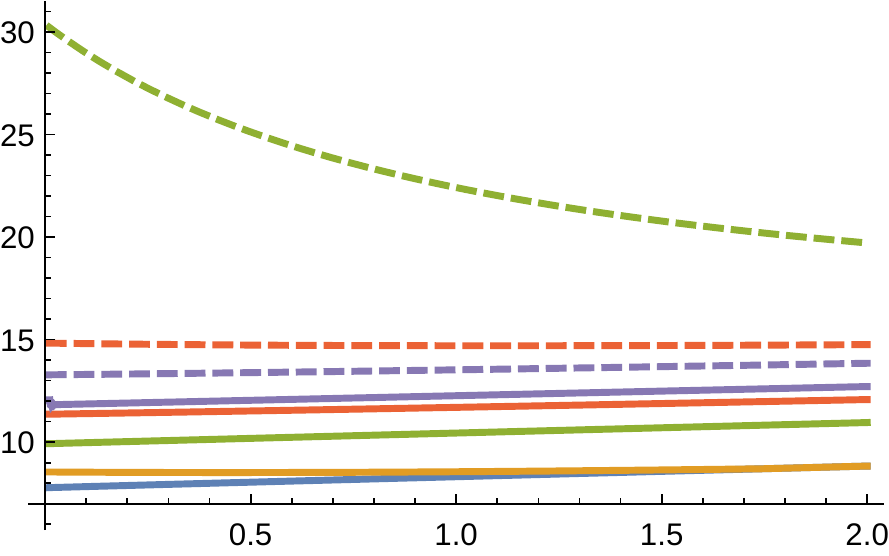}
\\
{\footnotesize $d = 9$, $n = 0$} &
{\footnotesize $d = 9$, $n = 1$} &
{\footnotesize $d = 9$, $n = 2$}
\end{tabular}
\caption{Plots of lower bounds $(
\mk{\lambdal_{d,n}^{(N-1)}}
)^{1/\alpha}$ (continuous line) and the upper bounds $(\mk{\lambdau_{d,n}^{(N)}})^{1/\alpha}$ (dashed line) for $\alpha \in (0, 2]$, with $N = 1$ (blue), $2$ (yellow), $3$ (green), $4$ (red), $5$ (dark blue).}
\label{fig:eigenvalues}
\end{figure}

\begin{figure}[h!]
\centering
\begin{tabular}{ccc}
\includegraphics[width=0.3\textwidth]{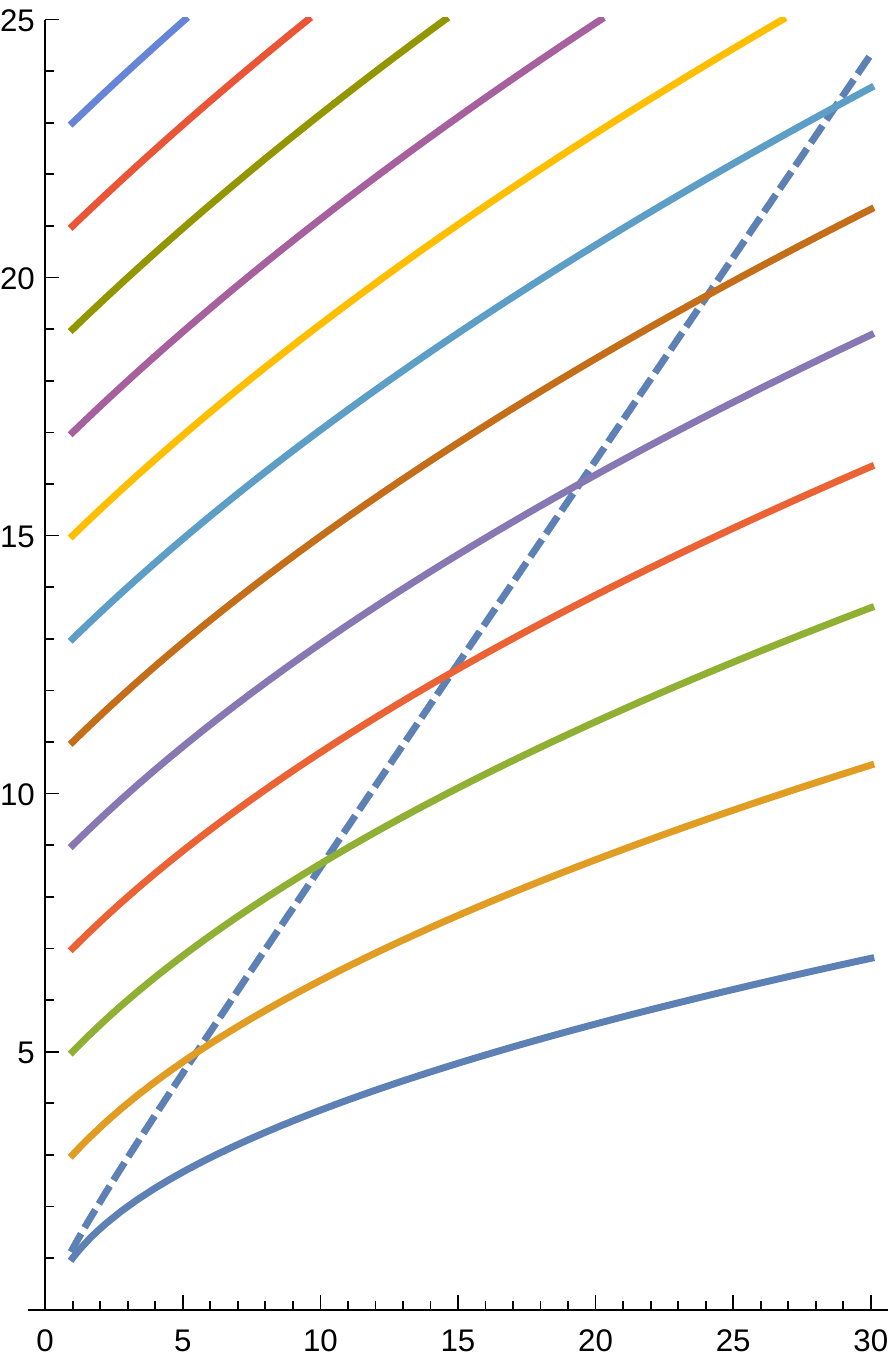} &
\includegraphics[width=0.3\textwidth]{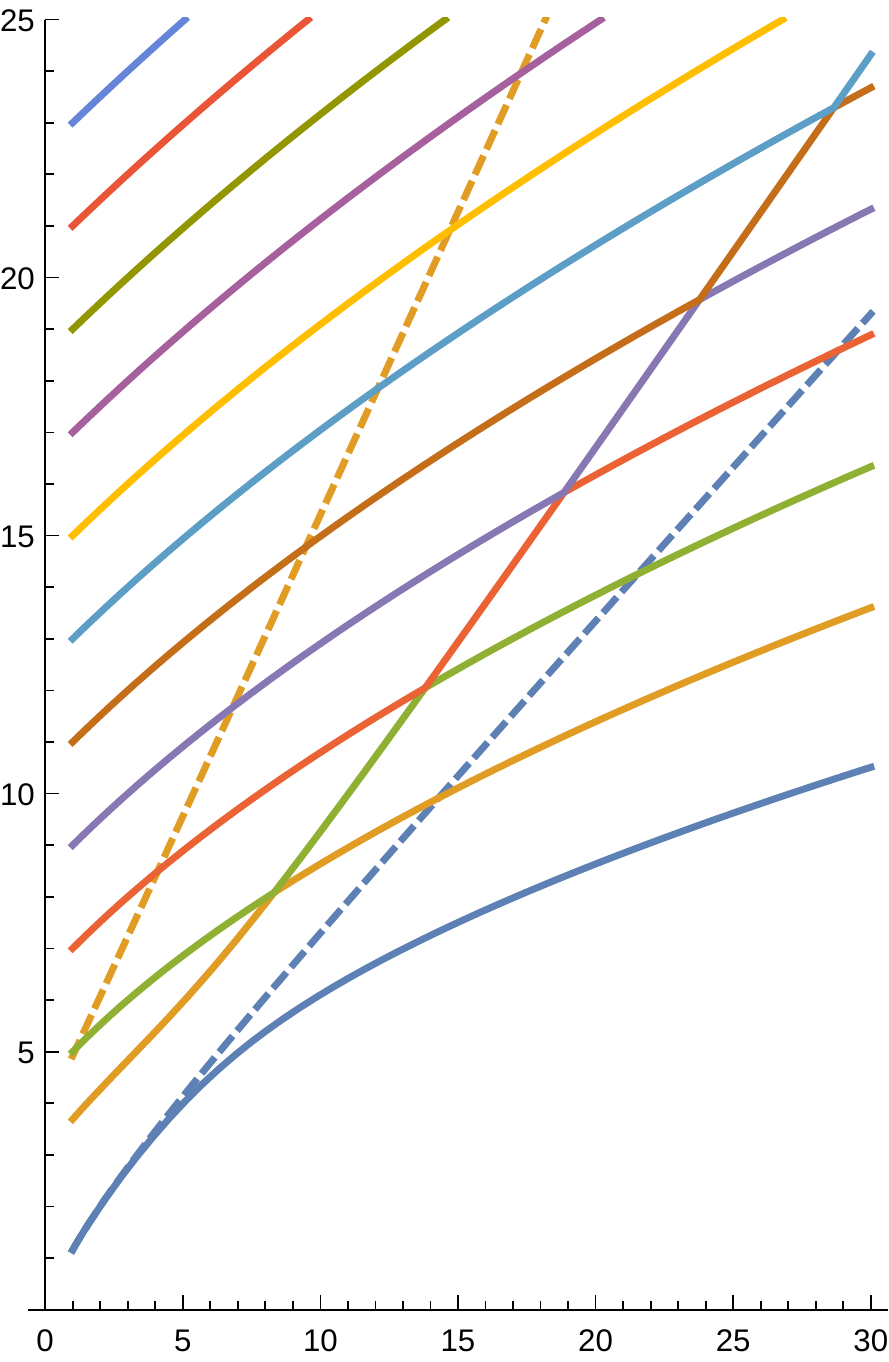} &
\includegraphics[width=0.3\textwidth]{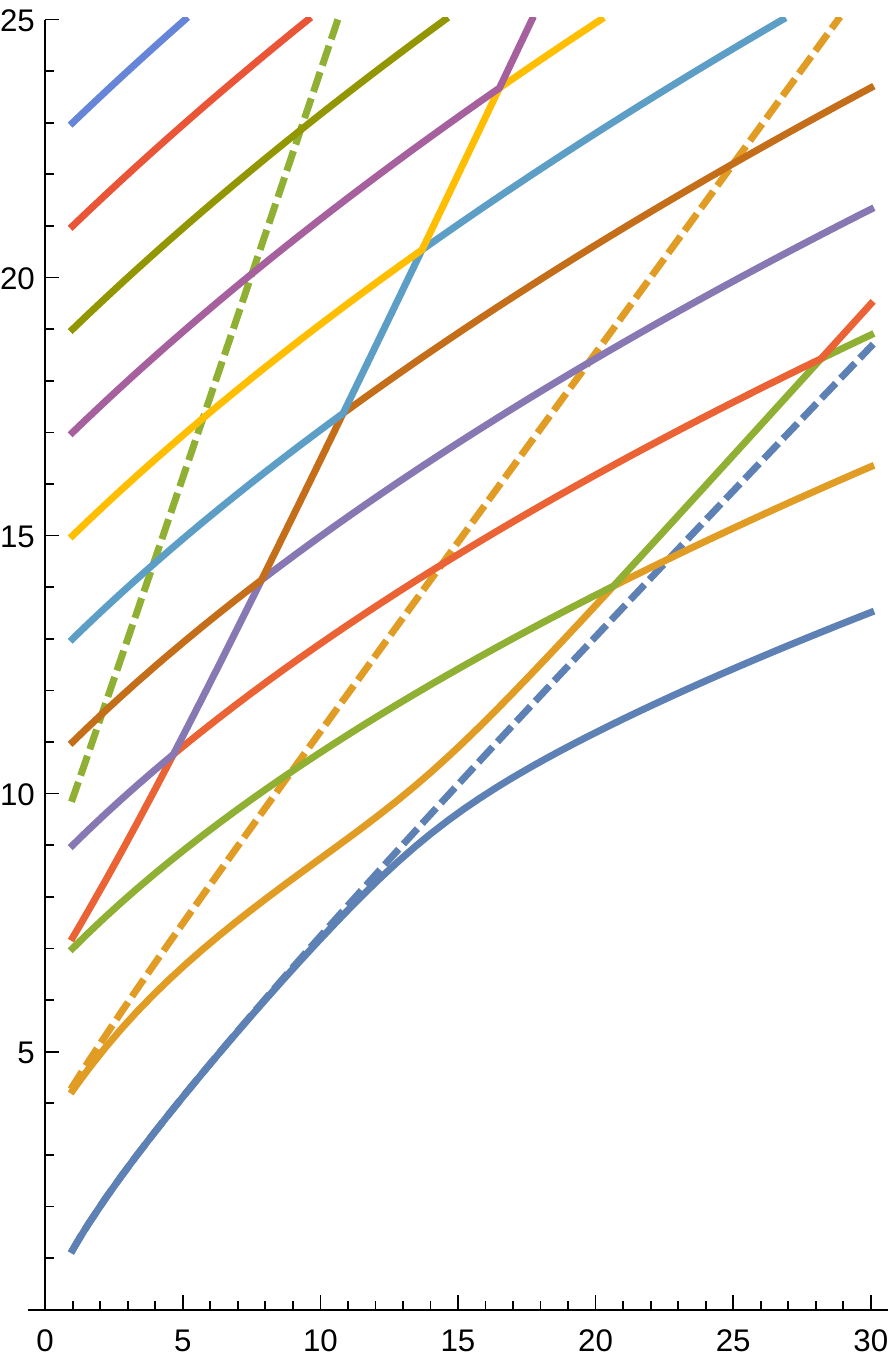}
\\
{\footnotesize $N = 1$} &
{\footnotesize $N = 2$} &
{\footnotesize $N = 3$} \\[0.5em]
\includegraphics[width=0.3\textwidth]{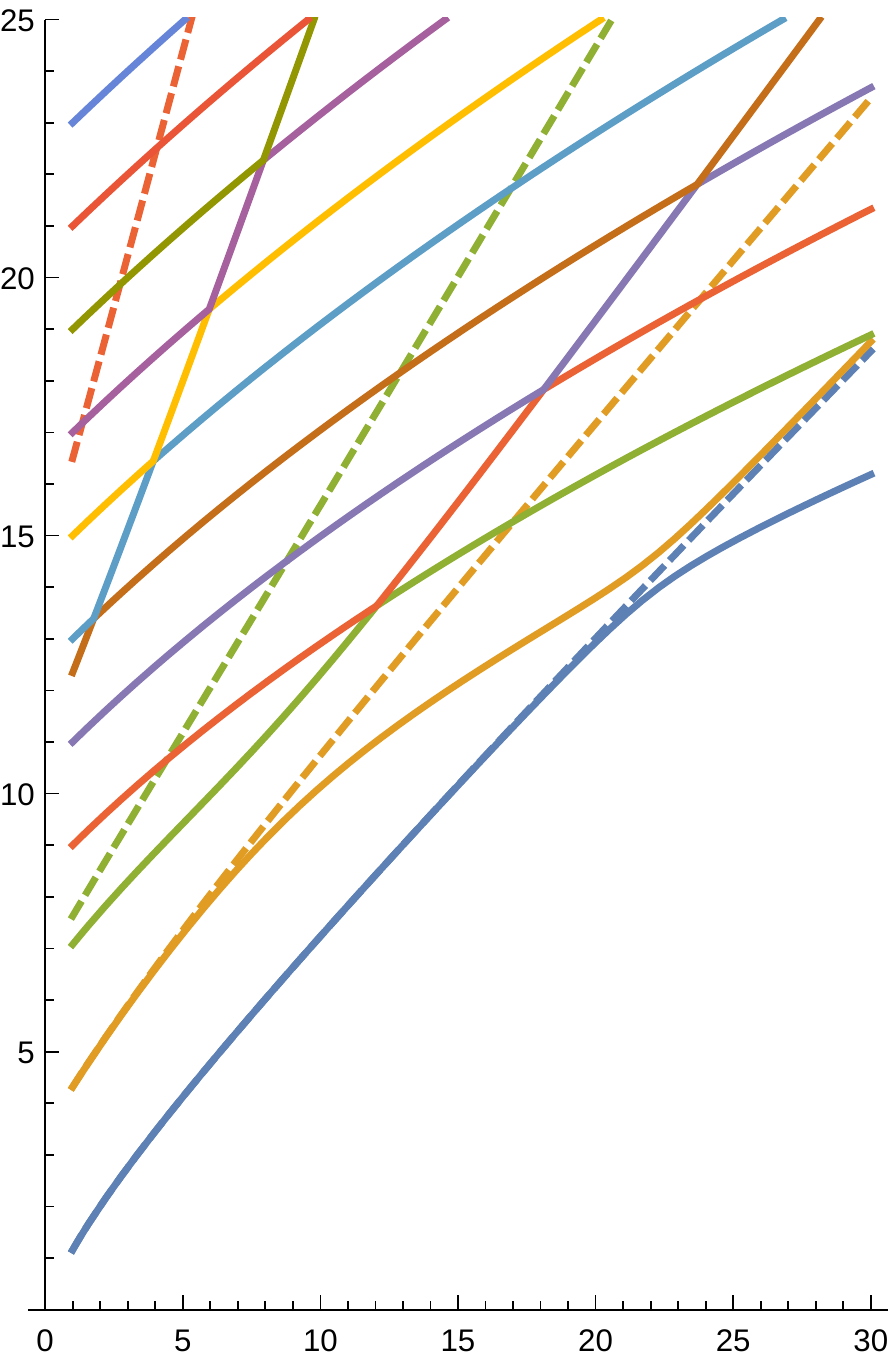} &
\includegraphics[width=0.3\textwidth]{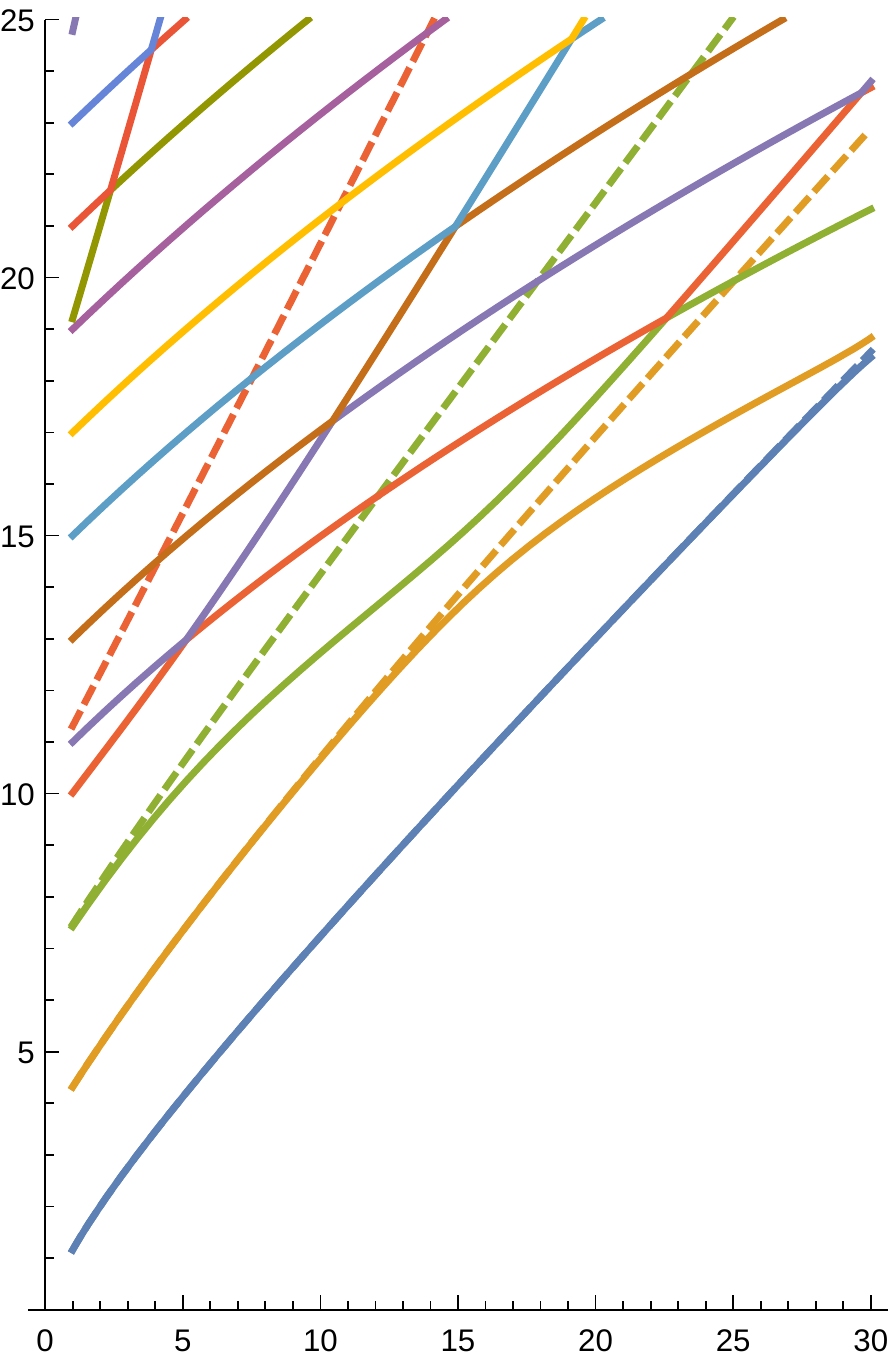} &
\includegraphics[width=0.3\textwidth]{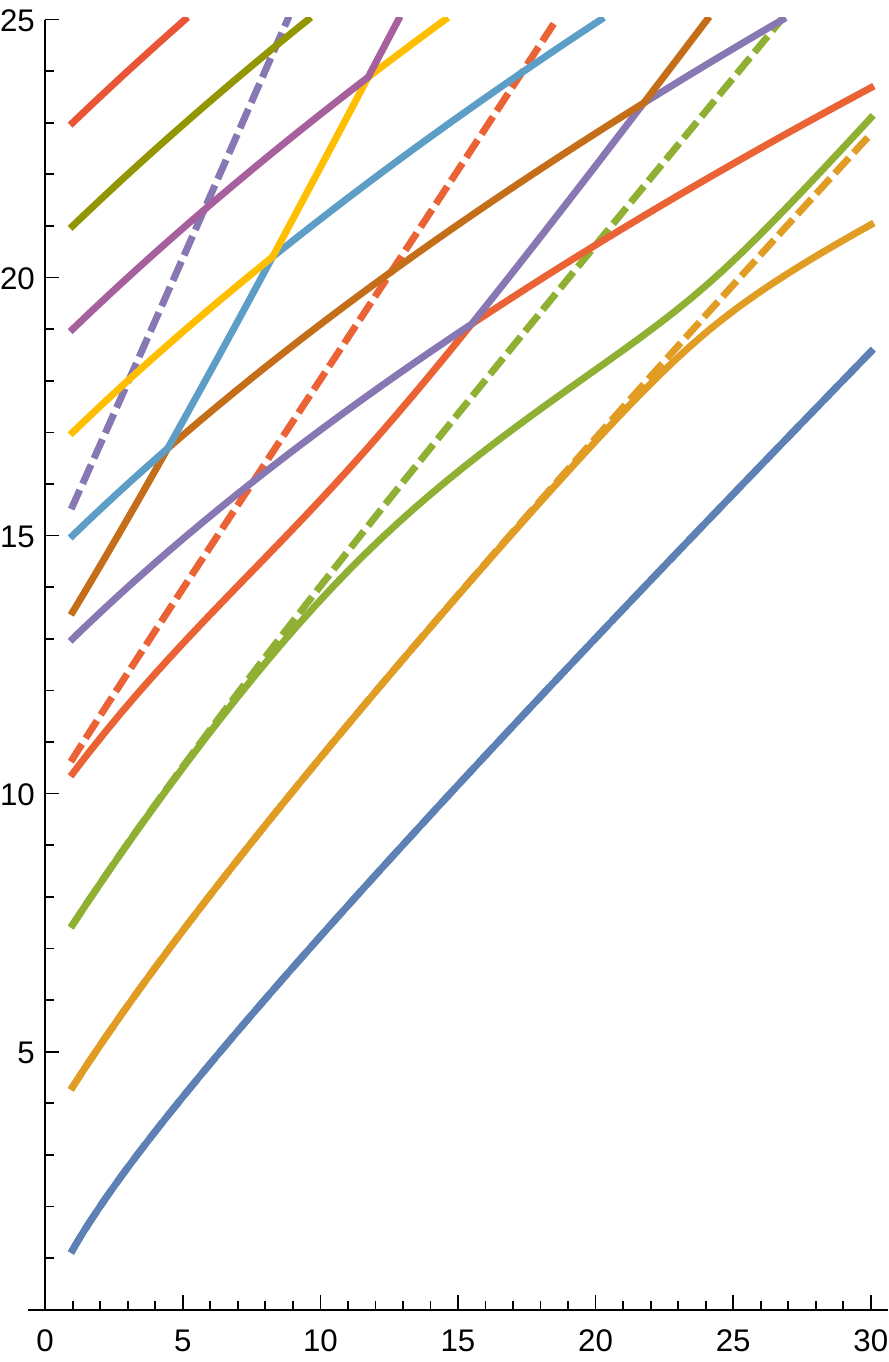}
\\
{\footnotesize $N = 4$} &
{\footnotesize $N = 5$} &
{\footnotesize $N = 6$}
\end{tabular}
\caption{Plots of lower bounds $
\mk{\lambdal_{d,n}^{(N-1)}}
$ (continuous line) and the upper bounds $\mk{\lambdau_{d,n}^{(N)}}$ (dashed line) for a continuous parameter $d \in [1, 20]$, with $\alpha = 1$ and $n = 1, 2, 3, 4, \dots$ (blue, yellow, green, red, \dots).}
\label{fig:eigenvaluesdim}
\end{figure}

\begin{table}[h!]
\centering
{\small\def\arraystretch{1.2}
\begin{tabular}{c|llllll}
& \multicolumn{1}{c}{$\alpha = 0.01$} & \multicolumn{1}{c}{$\alpha = 0.1$} & \multicolumn{1}{c}{$\alpha = 0.5$} & \multicolumn{1}{c}{$\alpha = 1$} & \multicolumn{1}{c}{$\alpha = 1.5$} & \multicolumn{1}{c}{$\alpha = 2$} \\
\hline
$N = 1$ & ${\raisebox{0.2em}{${}_{0.99}$}}^{7355322}_{4325851}$ & ${\raisebox{0.2em}{${}_{0.9}$}}^{78593744}_{51350769}$ & ${\raisebox{0.2em}{${}_{0.}$}}^{986225040}_{886226925}$ & ${\raisebox{0.2em}{${}_{1.}$}}^{178097246}_{000000000}$ & ${\raisebox{0.2em}{${}_{1.}$}}^{622347777}_{329340388}$ & ${\raisebox{0.2em}{${}_{2.}$}}^{500000000}_{000000000}$\\ 
$N = 2$ & ${\raisebox{0.2em}{${}_{0.996}$}}^{653642}_{589772}$ & ${\raisebox{0.2em}{${}_{0.972}$}}^{724341}_{237543}$ & ${\raisebox{0.2em}{${}_{0.9}$}}^{70288041}_{69571203}$ & ${\raisebox{0.2em}{${}_{1.157}$}}^{795514}_{615128}$ & ${\raisebox{0.2em}{${}_{1.597}$}}^{505864}_{338927}$ & ${\raisebox{0.2em}{${}_{2.46}$}}^{7437406}_{5598857}$\\ 
$N = 3$ & ${\raisebox{0.2em}{${}_{0.9966}$}}^{40784}_{21082}$ & ${\raisebox{0.2em}{${}_{0.972}$}}^{634579}_{486169}$ & ${\raisebox{0.2em}{${}_{0.9}$}}^{70196850}_{69976158}$ & ${\raisebox{0.2em}{${}_{1.157}$}}^{780210}_{685694}$ & ${\raisebox{0.2em}{${}_{1.597}$}}^{504264}_{482565}$ & ${\raisebox{0.2em}{${}_{2.46740}$}}^{1109}_{0328}$\\ 
$N = 4$ & ${\raisebox{0.2em}{${}_{0.9966}$}}^{37325}_{29236}$ & ${\raisebox{0.2em}{${}_{0.972}$}}^{611103}_{552418}$ & ${\raisebox{0.2em}{${}_{0.9701}$}}^{75211}_{03988}$ & ${\raisebox{0.2em}{${}_{1.1577}$}}^{75153}_{53127}$ & ${\raisebox{0.2em}{${}_{1.59750}$}}^{3629}_{0476}$ & ${\raisebox{0.2em}{${}_{2.46740110}$}}_0^1$\\ 
$N = 5$ & ${\raisebox{0.2em}{${}_{0.99663}$}}^{6075}_{1926}$ & ${\raisebox{0.2em}{${}_{0.972}$}}^{602985}_{573828}$ & ${\raisebox{0.2em}{${}_{0.9701}$}}^{69377}_{39630}$ & ${\raisebox{0.2em}{${}_{1.1577}$}}^{74241}_{67198}$ & ${\raisebox{0.2em}{${}_{1.59750}$}}^{3562}_{2809}$ & ${\raisebox{0.2em}{${}_{2.46740110}$}}_0^1$\\ 
$N = 6$ & ${\raisebox{0.2em}{${}_{0.99663}$}}^{5507}_{3068}$ & ${\raisebox{0.2em}{${}_{0.9725}$}}^{99430}_{82753}$ & ${\raisebox{0.2em}{${}_{0.9701}$}}^{67308}_{52678}$ & ${\raisebox{0.2em}{${}_{1.15777}$}}^{4009}_{1265}$ & ${\raisebox{0.2em}{${}_{1.597503}$}}^{550}_{322}$ & ${\raisebox{0.2em}{${}_{2.46740110}$}}_0^1$\\ 
$N = 7$ & ${\raisebox{0.2em}{${}_{0.99663}$}}^{5208}_{3638}$ & ${\raisebox{0.2em}{${}_{0.9725}$}}^{97620}_{87131}$ & ${\raisebox{0.2em}{${}_{0.9701}$}}^{66430}_{58373}$ & ${\raisebox{0.2em}{${}_{1.15777}$}}^{3935}_{2704}$ & ${\raisebox{0.2em}{${}_{1.597503}$}}^{547}_{464}$ & ${\raisebox{0.2em}{${}_{2.46740110}$}}_0^1$\\ 
$N = 8$ & ${\raisebox{0.2em}{${}_{0.99663}$}}^{5035}_{3955}$ & ${\raisebox{0.2em}{${}_{0.9725}$}}^{96597}_{89535}$ & ${\raisebox{0.2em}{${}_{0.97016}$}}^{6007}_{1187}$ & ${\raisebox{0.2em}{${}_{1.157773}$}}^{907}_{293}$ & ${\raisebox{0.2em}{${}_{1.5975035}$}}^{47}_{12}$ & ${\raisebox{0.2em}{${}_{2.46740110}$}}_0^1$
\end{tabular}
}
\caption{Estimates of $\lambda_{d,0}$, namely $
\mk{\lambdau_{d,0}^{(N)}}$ and $
\mk{\lambdal_{d,0}^{(N-1)}}
$, for $d = 1$. Here and below upper bounds are given in superscript, and lower bounds in subscript}
\label{tab:eigenvalues0}
\end{table}

\begin{table}[ht]
\centering
{\small\def\arraystretch{1.2}
\begin{tabular}{c|llllll}
& \multicolumn{1}{c}{$\alpha = 0.01$} & \multicolumn{1}{c}{$\alpha = 0.1$} & \multicolumn{1}{c}{$\alpha = 0.5$} & \multicolumn{1}{c}{$\alpha = 1$} & \multicolumn{1}{c}{$\alpha = 1.5$} & \multicolumn{1}{c}{$\alpha = 2$} \\
\hline
$N = 1$ & ${\raisebox{0.2em}{${}_{1.00}$}}^{6182160}_{1201059}$ & ${\raisebox{0.2em}{${}_{1.0}$}}^{64099168}_{15731023}$ & ${\raisebox{0.2em}{${}_{1.}$}}^{394242803}_{161869002}$ & ${}^{2.094395103}_{1.570796326}$ & ${}^{3.413006154}_{2.389104307}$ & ${}^{6.000000000}_{4.000000000}$\\ 
$N = 2$ & ${\raisebox{0.2em}{${}_{1.0047}$}}^{64277}_{44736}$ & ${\raisebox{0.2em}{${}_{1.05}$}}^{1022082}_{0866146}$ & ${\raisebox{0.2em}{${}_{1.343}$}}^{796045}_{364229}$ & ${\raisebox{0.2em}{${}_{2.00}$}}^{6175892}_{4250994}$ & ${\raisebox{0.2em}{${}_{3.2}$}}^{76235625}_{67594376}$ & ${\raisebox{0.2em}{${}_{5.7}$}}^{84128281}_{53788748}$\\ 
$N = 3$ & ${\raisebox{0.2em}{${}_{1.0047}$}}^{60636}_{44767}$ & ${\raisebox{0.2em}{${}_{1.050}$}}^{997459}_{866330}$ & ${\raisebox{0.2em}{${}_{1.343}$}}^{788026}_{505712}$ & ${\raisebox{0.2em}{${}_{2.00}$}}^{6139481}_{5947284}$ & ${\raisebox{0.2em}{${}_{3.275}$}}^{942321}_{848669}$ & ${\raisebox{0.2em}{${}_{5.7831}$}}^{86924}_{37138}$\\ 
$N = 4$ & ${\raisebox{0.2em}{${}_{1.00475}$}}^{7234}_{0212}$ & ${\raisebox{0.2em}{${}_{1.0509}$}}^{71358}_{15633}$ & ${\raisebox{0.2em}{${}_{1.343}$}}^{748669}_{653052}$ & ${\raisebox{0.2em}{${}_{2.006}$}}^{123190}_{081438}$ & ${\raisebox{0.2em}{${}_{3.27593}$}}^{7929}_{0257}$ & ${\raisebox{0.2em}{${}_{5.7831859}$}}^{64}_{37}$\\ 
$N = 5$ & ${\raisebox{0.2em}{${}_{1.00475}$}}^{5934}_{2117}$ & ${\raisebox{0.2em}{${}_{1.0509}$}}^{61820}_{32405}$ & ${\raisebox{0.2em}{${}_{1.343}$}}^{737534}_{694385}$ & ${\raisebox{0.2em}{${}_{2.0061}$}}^{20326}_{05496}$ & ${\raisebox{0.2em}{${}_{3.27593}$}}^{7624}_{5484}$ & ${\raisebox{0.2em}{${}_{5.78318596}$}_2^3}$\\ 
$N = 6$ & ${\raisebox{0.2em}{${}_{1.00475}$}}^{5316}_{2991}$ & ${\raisebox{0.2em}{${}_{1.0509}$}}^{57437}_{39971}$ & ${\raisebox{0.2em}{${}_{1.3437}$}}^{33293}_{11001}$ & ${\raisebox{0.2em}{${}_{2.00611}$}}^{9514}_{3354}$ & ${\raisebox{0.2em}{${}_{3.27593}$}}^{7560}_{6864}$ & ${\raisebox{0.2em}{${}_{5.78318596}$}_2^3}$\\ 
$N = 7$ & ${\raisebox{0.2em}{${}_{1.00475}$}}^{4982}_{3449}$ & ${\raisebox{0.2em}{${}_{1.0509}$}}^{55141}_{43873}$ & ${\raisebox{0.2em}{${}_{1.3437}$}}^{31413}_{18702}$ & ${\raisebox{0.2em}{${}_{2.00611}$}}^{9238}_{6345}$ & ${\raisebox{0.2em}{${}_{3.275937}$}}^{544}_{279}$ & ${\raisebox{0.2em}{${}_{5.78318596}$}_2^3}$\\ 
$N = 8$ & ${\raisebox{0.2em}{${}_{1.00475}$}}^{4785}_{3713}$ & ${\raisebox{0.2em}{${}_{1.0509}$}}^{53819}_{46092}$ & ${\raisebox{0.2em}{${}_{1.3437}$}}^{30479}_{22677}$ & ${\raisebox{0.2em}{${}_{2.00611}$}}^{9130}_{7637}$ & ${\raisebox{0.2em}{${}_{3.275937}$}}^{539}_{426}$ & ${\raisebox{0.2em}{${}_{5.78318596}$}_2^3}$
\end{tabular}
}
\caption{Estimates of $\lambda_{d,0}$, namely $
\mk{\lambdau_{d,0}^{(N)}}
$ and $
\mk{\lambdal_{d,0}^{(N-1)}}
$, for $d = 2$}
\label{tab:eigenvalues1}
\end{table}

\begin{table}[ht]
\centering
{\small\def\arraystretch{1.2}
\begin{tabular}{c|llllll}
& \multicolumn{1}{c}{$\alpha = 0.01$} & \multicolumn{1}{c}{$\alpha = 0.1$} & \multicolumn{1}{c}{$\alpha = 0.5$} & \multicolumn{1}{c}{$\alpha = 1$} & \multicolumn{1}{c}{$\alpha = 1.5$} & \multicolumn{1}{c}{$\alpha = 2$} \\
\hline
$N = 1$ & ${\raisebox{0.2em}{${}_{1.0}$}}^{22142633}_{11074238}$ & ${\raisebox{0.2em}{${}_{1.}$}}^{241761168}_{118753169}$ & ${}^{2.847446732}_{1.827843033}$ & ${}^{7.775441818}_{3.657142857}$ & ${}^{21.189848505}_{\phantom{0}7.869220333}$ & ${}^{58.500000000}_{18.000000000}$\\ 
$N = 2$ & ${\raisebox{0.2em}{${}_{1.01}$}}^{8704583}_{6785647}$ & ${\raisebox{0.2em}{${}_{1.}$}}^{204060727}_{182127557}$ & ${\raisebox{0.2em}{${}_{2.}$}}^{553470124}_{350636474}$ & ${}^{6.677830073}_{5.762545969}$ & ${\raisebox{0.2em}{${}_{1}$}}^{7.911477317}_{4.668543888}$ & ${}^{49.295799157}_{38.643391195}$\\ 
$N = 3$ & ${\raisebox{0.2em}{${}_{1.0185}$}}^{98071}_{19922}$ & ${\raisebox{0.2em}{${}_{1.20}$}}^{2849999}_{1981651}$ & ${\raisebox{0.2em}{${}_{2.5}$}}^{42649925}_{35387077}$ & ${\raisebox{0.2em}{${}_{6.6}$}}^{31881932}_{01605246}$ & ${\raisebox{0.2em}{${}_{17.}$}}^{760546718}_{657335339}$ & ${\raisebox{0.2em}{${}_{48.}$}}^{841213791}_{503783872}$\\ 
$N = 4$ & ${\raisebox{0.2em}{${}_{1.01858}$}}^{8891}_{6584}$ & ${\raisebox{0.2em}{${}_{1.2027}$}}^{55722}_{35425}$ & ${\raisebox{0.2em}{${}_{2.5420}$}}^{90312}_{32407}$ & ${\raisebox{0.2em}{${}_{6.630}$}}^{296958}_{170607}$ & ${\raisebox{0.2em}{${}_{17.75}$}}^{6624774}_{5998202}$ & ${\raisebox{0.2em}{${}_{48.8}$}}^{31283297}_{28376987}$\\ 
$N = 5$ & ${\raisebox{0.2em}{${}_{1.01858}$}}^{8512}_{6588}$ & ${\raisebox{0.2em}{${}_{1.2027}$}}^{52564}_{35527}$ & ${\raisebox{0.2em}{${}_{2.5420}$}}^{85179}_{38329}$ & ${\raisebox{0.2em}{${}_{6.6302}$}}^{96943}_{58528}$ & ${\raisebox{0.2em}{${}_{17.756}$}}^{610435}_{589783}$ & ${\raisebox{0.2em}{${}_{48.8311}$}}^{94039}_{79729}$\\ 
$N = 6$ & ${\raisebox{0.2em}{${}_{1.01858}$}}^{8127}_{6814}$ & ${\raisebox{0.2em}{${}_{1.2027}$}}^{49211}_{37864}$ & ${\raisebox{0.2em}{${}_{2.5420}$}}^{76948}_{49917}$ & ${\raisebox{0.2em}{${}_{6.6302}$}}^{91620}_{75546}$ & ${\raisebox{0.2em}{${}_{17.75660}$}}^{8599}_{4982}$ & ${\raisebox{0.2em}{${}_{48.8311936}$}}^{45}_{03}$\\ 
$N = 7$ & ${\raisebox{0.2em}{${}_{1.01858}$}}^{7899}_{6939}$ & ${\raisebox{0.2em}{${}_{1.2027}$}}^{47286}_{39124}$ & ${\raisebox{0.2em}{${}_{2.5420}$}}^{72930}_{55189}$ & ${\raisebox{0.2em}{${}_{6.63028}$}}^{9740}_{0585}$ & ${\raisebox{0.2em}{${}_{17.75660}$}}^{8285}_{6462}$ & ${\raisebox{0.2em}{${}_{48.83119364}$}_3^4}$\\ 
$N = 8$ & ${\raisebox{0.2em}{${}_{1.018587}$}}^{751}_{025}$ & ${\raisebox{0.2em}{${}_{1.2027}$}}^{46054}_{39984}$ & ${\raisebox{0.2em}{${}_{2.5420}$}}^{70634}_{58531}$ & ${\raisebox{0.2em}{${}_{6.63028}$}}^{8837}_{3411}$ & ${\raisebox{0.2em}{${}_{17.75660}$}}^{8157}_{7242}$ & ${\raisebox{0.2em}{${}_{48.83119364}$}_3^4}$\end{tabular}
}
\caption{Estimates of $\lambda_{d,0}$, namely $
\mk{\lambdau_{d,0}^{(N)}}
$ and $
\mk{\lambdal_{d,0}^{(N-1)}}
$, for $d = 9$.}
\label{tab:eigenvalues3}
\end{table}

\begin{table}[ht]
\centering
{\small\def\arraystretch{1.2}
\begin{tabular}{c|llllll}
& \multicolumn{1}{c}{$\alpha = 0.01$} & \multicolumn{1}{c}{$\alpha = 0.1$} & \multicolumn{1}{c}{$\alpha = 0.5$} & \multicolumn{1}{c}{$\alpha = 1$} & \multicolumn{1}{c}{$\alpha = 1.5$} & \multicolumn{1}{c}{$\alpha = 2$} \\
\hline
$N = 1$ & ${}^{\infty}_{1.019691102}$ & ${}^{\infty}_{1.216080194}$ & ${}^{\infty}_{2.696537403}$ & ${}^{\infty}_{7.516505860}$ & ${}^{\infty}_{21.614073847}$ & ${}^{\infty}_{\phantom{0}64.000000000}$\\ 
$N = 2$ & ${}^{\infty}_{1.019691102}$ & ${}^{\infty}_{1.216080194}$ & ${}^{\infty}_{2.696537403}$ & ${}^{\infty}_{7.516505860}$ & ${}^{\infty}_{21.614073847}$ & ${}^{\infty}_{\phantom{0}64.000000000}$\\ 
$N = 3$ & ${}^{\infty}_{1.021053433}$ & ${}^{\infty}_{1.231703408}$ & ${}^{\infty}_{2.840043505}$ & ${}^{\infty}_{8.131382095}$ & ${}^{\infty}_{23.590433751}$ & ${}^{\infty}_{\phantom{0}69.593538821}$\\ 
$N = 4$ & ${\raisebox{0.2em}{${}_{1.0}$}}^{32479732}_{22241923}$ & ${\raisebox{0.2em}{${}_{1.}$}}^{371086873}_{246672211}$ & ${}^{4.530615391}_{3.044137927}$ & ${}^{18.452353592}_{\phantom{0}9.513077729}$ & ${}^{71.419348109}_{30.479221324}$ & ${}^{269.510897721}_{100.000000000}$\\ 
$N = 5$ & ${\raisebox{0.2em}{${}_{1.02}$}}^{5380240}_{3629216}$ & ${\raisebox{0.2em}{${}_{1.2}$}}^{84853223}_{63489904}$ & ${\raisebox{0.2em}{${}_{3.}$}}^{504465514}_{243864464}$ & ${\raisebox{0.2em}{${}_{1}$}}^{2.333689344}_{0.716443875}$ & ${}^{43.715868468}_{36.043117616}$ & ${\raisebox{0.2em}{${}_{1}$}}^{56.360051134}_{23.377854920}$\\ 
$N = 6$ & ${\raisebox{0.2em}{${}_{1.02}$}}^{4439711}_{3944192}$ & ${\raisebox{0.2em}{${}_{1.2}$}}^{73412947}_{67411652}$ & ${\raisebox{0.2em}{${}_{3.}$}}^{367493681}_{295975897}$ & ${\raisebox{0.2em}{${}_{11.}$}}^{510711118}_{072719899}$ & ${\raisebox{0.2em}{${}_{3}$}}^{9.961582676}_{7.886428493}$ & ${\raisebox{0.2em}{${}_{1}$}}^{40.917773984}_{31.938621312}$\\ 
$N = 7$ & ${\raisebox{0.2em}{${}_{1.0242}$}}^{83730}_{46278}$ & ${\raisebox{0.2em}{${}_{1.271}$}}^{573161}_{116171}$ & ${\raisebox{0.2em}{${}_{3.34}$}}^{7850527}_{2181181}$ & ${\raisebox{0.2em}{${}_{11.}$}}^{404208370}_{367799234}$ & ${\raisebox{0.2em}{${}_{39.}$}}^{508972239}_{330081434}$ & ${\raisebox{0.2em}{${}_{13}$}}^{9.145260868}_{8.349747040}$\\ 
$N = 8$ & ${\raisebox{0.2em}{${}_{1.02427}$}}^{7661}_{4471}$ & ${\raisebox{0.2em}{${}_{1.2714}$}}^{97834}_{63292}$ & ${\raisebox{0.2em}{${}_{3.346}$}}^{898372}_{612135}$ & ${\raisebox{0.2em}{${}_{11.39}$}}^{8420985}_{7008167}$ & ${\raisebox{0.2em}{${}_{39.4}$}}^{83166347}_{76941411}$ & ${\raisebox{0.2em}{${}_{139.0}$}}^{43142252}_{16247816}$
\end{tabular}
}
\caption{Estimates of $\lambda_{d,3}$, namely $
\mk{\lambdau_{d,3}^{(N)}}
$ and $
\mk{\lambdal_{d,3}^{(N-1)}}
$, for $d = 2$.}
\label{tab:eigenvalues2}
\end{table}

%
%

\end{document}